\newtheorem{theorem}{Theorem}
\newtheorem{fact}{Fact}
\newtheorem*{lemma}{Lemma}
\newcommand{\T}{\mathbb{T}}
\newcommand{\TT}{\mathcal{T}}
\newcommand{\HH}{\mathcal{H}}
\newcommand{\R}{\mathbb{R}}
\newcommand{\nul}{\mathrm{null}}
\DeclareRobustCommand{\divby}{%
	\mathrel{\text{\vbox{\baselineskip.65ex\lineskiplimit0pt\hbox{.}\hbox{.}\hbox{.}}}}%
}
\begin{document}
	
	\baselineskip=17pt
	
	\title{Absence of local unconditional structure in spaces of smooth functions on the torus of arbitrary dimension}
	
	\author{Anton Tselishchev\\
	Chebyshev Laboratory\\
	St. Petersburg State University\\
	14th Line V.O., 29\\
	Saint Petersburg 199178, Russia\\
	E-mail: celis-anton@yandex.ru}

	\date{}
	
	\maketitle
	
	
	\renewcommand{\thefootnote}{}
	
	\footnote{2020 \emph{Mathematics Subject Classification}: 46E15.}
	
	\footnote{\emph{Key words and phrases}: Banach spaces of smooth functions, local unconditional structure.}
	
	\renewcommand{\thefootnote}{\arabic{footnote}}
	\setcounter{footnote}{0}
	
	
	\begin{abstract}
		Consider a finite collection $\{T_1, \ldots, T_J\}$ of differential operators with constant coefficients on $\mathbb{T}^n$ ($n\geq 2$) and the space of smooth functions generated by this collection, namely, the space of functions $f$ such that $T_j f \in C(\mathbb{T}^n)$, $1\leq j\leq J$. We prove that if there are at least two linearly independent operators among their senior parts (relative to some mixed pattern of homogeneity), then this space does not have local unconditional structure. This fact generalizes the previously known result that such spaces are not isomorphic to a complemented subspace of $C(S)$.
	\end{abstract}
	
	\section{Introduction}
	
	It is a well-known fact of Banach space theory that for $n\geq 2$ the space $C^k(\T^n)$ of $k$ times continuously differentiable functions on the torus $\T^n$ is not isomorphic to the space $C(\T^n)$. This fact was first announced in $\cite{Grot}$ and later generalized in many directions (see \cite{Henk, KislFactor, Kisl0, KwaPel, Sid, PelSen, KisSid, KisMaks, KisMaks2, Maks}). However, the most general and natural framework was introduced only in the quite recent paper \cite{KMSpap} (see also the preprint \cite{KMSprepr} for the two-dimensional case).
	
	We proceed to the description of this framework. For a multiindex $\alpha=(\alpha_1, \ldots, \alpha_n)\in \mathbb{Z}_+^n$ ($\mathbb{Z}_+$ stands for the set of nonnegative integers) we denote by $D^\alpha$ the differential monomial $\partial_1^{\alpha_1}\ldots \partial_n^{\alpha_n}$ (note that we have $\partial z^l=2\pi i l z^l$, $z\in\mathbb{T}$). We fix some collection $\TT=\{T_1, T_2, \ldots, T_J\}$ of differential operators with constant coefficients on the torus $\T^n$. This means that each of these operators is a linear combination of the operators $D^\alpha$ with different multiindices $\alpha$. The number $\alpha_1+\ldots+\alpha_n$ is called the order of $D^\alpha$ and the order of the operator $T_j$ is the highest order among all differential monomials involved in it. 
	
	We consider the following seminorm on the trigonometric polynomials $f$:
	$$
	\|f\|_{\TT} =\max_{1\leq j\leq J} \|T_j f\|_{C(\T^n)}.
	$$ 
	The space which is generated by this seminorm (i.e., by factorizing over the null space and completion) is denoted by $C^\TT(\T^n)$. For example, if $\TT$ is the set of all differential monomials of order at most $k$, then this space coincides with the space $C^k(\T^n)$.
	
	The space $C^\TT(\T^n)$ is identified isometrically with a closed subspace of the space $\ell^\infty_J(C(\T^n))$ via the following map:
	$$
	f\mapsto (T_j f)_{j\le J}.
	$$
	The space $W_p^\TT(\T^n)$ for $p>0$ is defined similarly and is identified isometrically with a closed subspace of $\ell_J^\infty (L^p (\mathbb{T}^n))$ via the same map. Such spaces will appear in the paper for $p\in\{1/2, 1, 2\}$.
	
	The spaces $C^\TT(\T^n)$ were already studied in the papers \cite{KisMaks, KisMaks2}. In particular, the following statement was proven. Suppose that the order of every operator $T_j$ does not exceed $k$. Let us drop all the monomials in each $T_j$ with orders strictly smaller than $k$. If after that there are at least two linearly independent operators among the remaining senior parts of $T_j$, then the space $C^\TT(\T^n)$ is not isomorphic to a complemented subspace of $C(S)$. (We denote by $S$ an arbitrary uncountable compact metric space. According to the Milutin theorem, all the resulting $C(S)$ spaces are isomorphic.) However, if all senior parts are multiples of one of them, the situation was unclear.
	
	After that, in the paper \cite{KMSpap} a refinement of this statement was proven. In order to state it, we need the concept of mixed homogeneity.
	
	Fix some \textit{pattern of mixed homogeneity}, that is, a hyperplane $\Lambda$ in $\mathbb{R}^n$ intersecting the positive semiaxes. The equation of such a hyperplane is of the form $\langle x,\rho \rangle=1$ for some vector $\rho$ with positive coordinates. We call such a hyperplane admissible if all points $\alpha$ such that $D^\alpha$ is involved in some $T_j$ lie below $\Lambda$ or on it. This means that the following inequality holds:
	$$
	\sum_{j=1}^N \rho_j\alpha_j\leq 1.
	$$
	
	Now we define the senior part of $T_j$ as the linear combination of all differential monomials involved in $T_j$ whose multiindices lie on the hyperplane $\Lambda$ and the junior part as the remaining part of $T_j$. The senior part is denoted by $\sigma_j$ and the junior by $\tau_j$. In \cite{KMSpap} it is proven that if there are at least two linearly independent operators among these senior parts, then the bidual of the space $C^\TT(\T^n)$ is not isomorphic to a complemented subspace of a $C(S)$ space. 
	
	The same statement, but for the 2-dimensional torus $\T^2$, was also proved in the preprint \cite{KMSprepr}. However, it turned out that it is at least not clear how to derive the statement for an arbitrary dimension from it (see \cite{KMSpap} for details). Thus, in the proof of the theorem in the general case some significant technical difficulties appeared.
	
	Besides that, we note that for the case when all operators in $\TT$ are monomials, the statement about non isomorphism to a complemented subspace of a $C(S)$ space is not the most general. Let us give some necessary definitions. A Banach space $X$ is said to have local unconditional structure if there exists a constant $C>0$ such that for any finite-dimensional subspace $F\subset X$ there exists a Banach space $E$ with $1$-unconditional basis and two linear operators $R: F \rightarrow E$ and $S: E\rightarrow X$ such that $SRx = x$ for all $x\in F$ and $\|S\|\cdot \|R\| \leq C$. A basis $\{e_n\}$ is $1$-unconditional if for any numbers $\varepsilon_n$ with $|\varepsilon_n| \leq 1$ and any finite sequence $(\alpha_n)$ we have $\|\sum \varepsilon_n  \alpha_n x_n\| \leq \|\sum \alpha_n x_n\|$. We note that in general the property of having local unconditional structure is not stable when passing to closed subspaces (however it is true for complemented subspaces).
	
	In the paper \cite{KisSid} the following statements were proved in the case when all $T_j$ are differential monomials. If, again, there are at least two linearly independent operators among the senior parts of $T_j$ (here, each $T_j$ is a differential monomial, so these senior parts are equal either to $T_j$ or to zero), then the space $C^\TT(\T^n)$ does not have local unconditional structure. Besides that, if the space $C^\TT(\T^n)^*$ is isomorphic to a subspace of a space $Y$ with local unconditional structure, then $Y$ contains the spaces $\ell_\infty^k$ uniformly. This means that there exist subspaces $Y_k$ of $Y$ such that $\dim Y_k=k$, and invertible operators $T_k:Y_k\rightarrow \ell_\infty^k$ with the property $\|T_k\| \cdot \|T_k^{-1}\|\leq C$. Since the space $C(S)^*$ has local unconditional structure but does not contain the spaces $\ell_\infty^k$ uniformly, these statements imply that $C^\TT(\T^n)$ is not isomorphic to a quotient of a $C(S)$ space.
	
	Our goal in this paper is to prove the same statements but for the general operators $T_j$. Now, we state the main result.
	
	\begin{theorem}
		Suppose that for the collection $\TT$ there are at least two linearly independent operators among the $\sigma_j$ \emph(for some choice of an admissible hyperplane $\Lambda$\emph). Then $C^{\TT}(\T^n)$ does not have local unconditional structure. Furthermore, if $C^\TT(\T^n)^*$ is isomorphic to a subspace  of a space $Y$ with local unconditional structure, then $Y$ contains the spaces $\ell_\infty^k$ uniformly.
	\end{theorem}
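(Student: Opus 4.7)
The plan is to extend the monomial-case argument of \cite{KisSid} to general constant-coefficient operators by first reducing to the senior parts via mixed-homogeneity dilations and then executing a finite-dimensional Gordon--Lewis-type obstruction on the resulting homogeneous collection. For the reduction, fix the admissible hyperplane $\Lambda$ with normal vector $\rho$ and restrict attention to trigonometric polynomials whose spectra lie in a thin anisotropic shell around the dilate $N\Lambda$, with $N\to\infty$. On such polynomials the senior parts $\sigma_j$ act of order $N$, while every monomial entering $\tau_j$ acts of strictly smaller order, so after the natural normalization the seminorm $\|\cdot\|_\TT$ approaches $\max_j \|\sigma_j f\|_{C(\T^n)}$. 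Since LUST endows all finite-dimensional subspaces with a uniform factorization constant, any asymptotic obstruction to such factorizations on these subspaces for the homogeneous collection $\Sigma=\{\sigma_1,\ldots,\sigma_J\}$ already precludes LUST on $C^\TT(\T^n)$, so it suffices to treat the homogeneous case.

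Next I would build the finite-dimensional obstruction. Pick two linearly independent senior parts $\sigma_1,\sigma_2$ and, by rationally slicing the frequency lattice along an appropriate 2D affine plane, produce a sequence of lattice frequencies $\alpha$ on $N\Lambda$ for which the pair of Fourier symbols $(\sigma_1(\alpha),\sigma_2(\alpha))$ takes two directions spanning a sector of uniform aperture. On these anisotropic annuli one then imitates the 2D construction of \cite{KMSprepr}: assemble trigonometric polynomials (essentially Riesz-product-type kernels along two independent directions) whose images under $\sigma_1,\sigma_2$ realize a specific pattern that prevents the identity on their linear span from factoring through a Banach lattice with uniformly bounded constants. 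By the Figiel--Johnson--Tzafriri characterization of LUST as complementedness of the bidual in some Banach lattice, a uniform failure of such a factorization across a sequence of finite-dimensional subspaces rules out LUST for $C^\TT(\T^n)$; this simultaneously recovers the bidual-in-$C(S)$ obstruction of \cite{KMSpap}.

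For the second assertion, assume $C^\TT(\T^n)^*$ embeds into some $Y$ with LUST while $Y$ does not contain the $\ell_\infty^k$ uniformly. A Maurey--Nikishin-type argument applied to the LUST factorizations of the finite-dimensional restrictions of the embedding then upgrades those factorizations, uniformly, to factorizations through $L^p$-spaces for some finite $p$. Dualizing and transplanting back to $C^\TT(\T^n)$ yields uniformly bounded operators from the test subspaces produced in the previous step into $L^1$-type spaces, contradicting the lower bounds on their $\gamma_1$-norms given by the obstruction. Hence $Y$ must contain $\ell_\infty^k$ uniformly.

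The hard part will be the frequency-selection step in the homogenized case. For a monomial $\sigma_j$, its Fourier symbol on a character $z^\alpha$ is itself a monomial and automatically of size $N$ on $N\Lambda$; but for a general $\sigma_j$ the symbol is a true polynomial in the components of $\alpha$ that may vanish on subvarieties of $N\Lambda$, and ratios $\sigma_2(\alpha)/\sigma_1(\alpha)$ may be unbounded or arbitrarily small. Locating, on each sufficiently large $N\Lambda$, a robust family of lattice points on which both symbols have comparable size $N$ and remain uniformly linearly independent in direction --- while simultaneously keeping the junior-part contributions small --- is precisely the technical difficulty flagged in the introduction as the obstruction to deducing the general-dimension theorem from the two-dimensional preprint \cite{KMSprepr}.
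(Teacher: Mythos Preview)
Your proposal is an outline rather than a proof, and the step you yourself flag as ``the hard part'' is exactly the content of the theorem; you have not resolved it. Two concrete gaps:

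First, your reduction ``to the homogeneous case'' does not land in the setting of \cite{KisSid}. That paper treats collections of \emph{monomials}; your dilation argument at best replaces $\TT$ by $\Sigma=\{\sigma_1,\dots,\sigma_J\}$, which are still genuine polynomials on $\Lambda$. So after your reduction you are no closer to a known result --- the entire difficulty (symbols that may vanish or change direction on $\Lambda$, no single dominant monomial) persists. Your plan to then ``imitate the 2D construction of \cite{KMSprepr}'' via a rational slice is exactly the route the introduction warns does not carry over to $n\ge 3$; you restate the obstruction in your final paragraph but supply no mechanism to overcome it.

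Second, the analytic machinery you invoke is too coarse. The Figiel--Johnson--Tzafriri characterization and a vague ``Riesz-product-type'' construction do not by themselves produce the quantitative lower bounds needed; you never specify which finite-dimensional subspaces witness the blow-up, nor what inequality fails on them. Similarly, for the second assertion, ``a Maurey--Nikishin-type argument'' is a placeholder, not an argument.

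The paper proceeds quite differently and does not reduce to the homogeneous case. It builds a chain
\[
\widetilde{C}_0^{\HH}(\T^n)\xrightarrow{j}\widetilde{C}_0^{\TT}(\T^n)\xrightarrow{i}\widetilde{C}_0^{T_1,T_2}(\T^n)\xrightarrow{g}\widetilde{W}_1^{T_1,T_2}(\T^n)\xrightarrow{s}L^2\bigl(\R^2;|\xi_2|^{a_1/a_2-1}\bigr),
\]
where $g$ is $1$-summing and $s$ is constructed from the embedding theorem (Fact~1) of \cite{KMSpap}; junior terms are not discarded asymptotically but absorbed exactly via mixed-homogeneity multipliers (Fact~6). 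If $\widetilde{C}_0^{\TT}(\T^n)$ had LUST, Gordon--Lewis would factor $sgi$ through $L^1$, and then a generalized Grothendieck theorem would force the dual composite $Pj^*i^*g^*s^*$ (with $P$ an explicit projection into $W_{1/2}^{\HH}$) to be $2$-summing. The contradiction is obtained by an explicit computation: test functions $v_{pq}=z_1^p z_2^q z_3^{[q^{a_3/a_2}]}\cdots z_n^{[q^{a_n/a_2}]}$ on a carefully chosen arithmetic progression of frequencies (the ``divisible by $1000$'' device of Subsection~2.3) yield an orthogonal, hence weakly $2$-summable, sequence in the Hilbert space whose images have $\ell^2$-norm $\asymp\sum_q q^{-1}=\infty$. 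The second assertion follows from the same non-$2$-summingness via Fact~5 (Maurey--Pisier). None of this apparatus appears in your proposal.
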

	
	The statement about the absence of local unconditional structure was also proved in the preprint \cite{Tsel} for the two-dimensional case. However, the situation here is the same as in the preprint \cite{KMSprepr} and the paper \cite{KMSpap} --- the proof for arbitrary dimensions is significantly more difficult technically.
	
	The main ingredient of the proof of the theorem about non isomorphism to a complemented subspace of a $C(S)$ space from \cite{KMSpap} is the new embedding theorem which was also proved in \cite{KMSpap}. We are going to use it, so we state it here.
	
	\begin{fact}
		Let $\rho_0, \ldots, \rho_l$ be finite complex measures on $\mathbb{R}^n$ \emph($n\geq 2$\emph) and consider the following functions on $\mathbb{R}^2$: $h_j(\xi_1, \xi_2)=\hat{\rho}_j(\xi_1, |\xi_2|^{\theta_2}, \ldots, |\xi_2|^{\theta_n})$, where $\theta_2, \ldots, \theta_n$ are positive numbers. Fix some real number $\varkappa\geq 1$ and suppose that the functions $\eta_j$, $j=1,\ldots, l$, on $\mathbb{R}^2$ satisfy the following system of equations:
		\begin{gather*}
		\xi_1\eta_1(\xi)=h_0(\xi); \quad  \xi_1\eta_j(\xi)-|\xi_2|^\varkappa \eta_{j-1}(\xi) = h_{j-1}(\xi), \ \, j=2,\ldots, l; \\ -|\xi_2|^\varkappa \eta_l(\xi) = h_l(\xi).
		\end{gather*}
		
		Then the following inequality holds:
		$$
		\max_j \bigg(\int_{\mathbb{R}^2}|\eta_j(\xi)|^2 |\xi_2|^{\varkappa-1} d\xi\bigg)^{1/2} \lesssim \max_j\|\rho_j\|.
		$$
	\end{fact}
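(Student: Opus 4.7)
The first move is to decouple the system algebraically. Iterating forward from $\xi_1\eta_1 = h_0$ expresses
\[
\eta_j \;=\; \sum_{k=0}^{j-1} \xi_1^{k-j}|\xi_2|^{(j-1-k)\varkappa}\,h_k,
\]
while iterating backward from $-|\xi_2|^\varkappa\eta_l = h_l$ gives the alternative representation
\[
\eta_j \;=\; -\sum_{k=j}^{l} \xi_1^{k-j}|\xi_2|^{-(k-j+1)\varkappa}\,h_k.
\]
Equating the two expressions (for any single $j$) produces the single compatibility identity $\sum_{k=0}^{l}\xi_1^{k}|\xi_2|^{(l-k)\varkappa}h_k \equiv 0$; this identity is implicitly enforced by the hypothesis that the system has a solution, and it encodes all cancellation available among the $h_k$.

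Next, I would dyadically decompose the $(\xi_1,\xi_2)$-plane adapted to the anisotropic scale $\tau = \xi_1/|\xi_2|^\varkappa$: in the region $|\tau|\lesssim 1$ the forward formula for $\eta_j$ has only powers of $|\xi_2|^\varkappa\geq|\xi_1|$ in its favor, while in $|\tau|\gtrsim 1$ the backward formula wins. Away from the critical zone $|\tau|\sim 1$, a single monomial $\xi_1^{a}|\xi_2|^{b}h_k$ dominates the corresponding expression for $\eta_j$, so it suffices to bound each such monomial in $L^{2}(|\xi_2|^{\varkappa-1}\,d\xi)$ restricted to its dyadic region. The inner zone $|\tau|\sim 1$ is handled by invoking the compatibility identity, which in the $\tau$-variable reads $\sum_{k}\tau^{k}h_{k}=0$: this lets one trade the most singular term for a linear combination of the others on each dyadic strip in $\xi_2$, avoiding any logarithmic blow-up at the critical scale.

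The analytic core is a weighted restriction-type inequality: for any finite measure $\rho$ on $\mathbb{R}^n$ and any dyadic shell $|\xi_2|\sim 2^{M}$,
\[
\int \bigl|\hat\rho(\xi_1,|\xi_2|^{\theta_2},\ldots,|\xi_2|^{\theta_n})\bigr|^{2}\,|\xi_2|^{\varkappa-1}\,\mathbf 1_{A}(\xi)\,d\xi \;\lesssim\; \|\rho\|^{2},
\]
where $A$ is the appropriate anisotropic dyadic piece. I would prove this by a $TT^{*}$ argument: the inequality reduces to an $L^{\infty}$ bound on the oscillatory kernel $K(x,y)=\int e^{2\pi i\langle x-y,\Phi(\xi)\rangle}|\xi_2|^{\varkappa-1}\mathbf 1_{A}(\xi)\,d\xi$, where $\Phi(\xi_1,\xi_2)=(\xi_1,|\xi_2|^{\theta_2},\ldots,|\xi_2|^{\theta_n})$, which in turn is controlled by a van der Corput/stationary-phase estimate exploiting the non-degeneracy of $\Phi$. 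Summing over the dyadic pieces and over $k$ converts this into the stated bound by $\max_k\|\rho_k\|$.

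The main obstacle is precisely the weighted restriction inequality in the previous paragraph: getting a dyadic-summable bound uniform in $M$ requires a careful oscillatory-integral analysis adapted to the mixed-homogeneity scaling of $\Phi$, and this is the step where the specific structure of the $h_j$ (namely, that they arise from $n$-dimensional measures composed with the curve $\xi_2\mapsto(|\xi_2|^{\theta_2},\ldots,|\xi_2|^{\theta_n})$) is essential; without it the inequality simply fails because $\hat\rho$ has no intrinsic decay.
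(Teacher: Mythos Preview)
The paper does not prove this statement at all: Fact~1 is quoted from \cite{KMSpap} (Kislyakov--Maksimov--Stolyarov, \emph{J.\ Funct.\ Anal.}\ 2015) and used throughout as a black box, so there is no proof here to compare your proposal against.

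A few remarks on the outline itself.  The algebraic decoupling and the split according to the anisotropic ratio $\tau=\xi_1/|\xi_2|^{\varkappa}$ are indeed the natural opening moves, and the observation that the compatibility identity is what rescues the critical zone $|\tau|\sim 1$ is correct.  Two points deserve care.  First, you have the regions swapped: writing the prefactors as $\tau^{-(j-k)}|\xi_2|^{-\varkappa}$ (forward) and $\tau^{\,k-j}|\xi_2|^{-\varkappa}$ (backward) shows that the forward formula is the bounded one for $|\tau|\gtrsim 1$ and the backward one for $|\tau|\lesssim 1$, not the other way around.  Second, and more substantively, your proposed mechanism for the weighted restriction inequality---a $TT^{*}$ argument driven by van~der~Corput/stationary phase for the map $\Phi(\xi_1,\xi_2)=(\xi_1,|\xi_2|^{\theta_2},\ldots,|\xi_2|^{\theta_n})$---cannot be the whole story, because for $n=2$ the image surface is flat (it is just a fold of $\mathbb{R}^2$) and there is no curvature to exploit; yet the theorem is already nontrivial there.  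In that case the gain must come entirely from the monomial prefactors $\tau^{\pm m}$ (giving geometric decay in the dyadic parameter away from $|\tau|\sim 1$) together with the compatibility cancellation at $|\tau|\sim 1$, not from oscillation of $\widehat\rho$ along $\Phi$.  Your last paragraph suggests you expect the curve structure of $\Phi$ to supply decay of $\widehat\rho$; this is where the plan, as written, would break down.
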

	
	By $\|\rho_j\|$ we mean the total variation of the measure $\rho_j$. We use the symbol ``$\lesssim$" to indicate that the left hand side does not exceed some positive constant times the right hand side. Also, the notation $A\asymp B$ means that $A\lesssim B$ and $B\lesssim A$.
	
	It is worth noting that the system of equations in Fact 1 is solvable iff the following compatibility condition holds (see \cite{KMSpap}): $\sum_{j=0}^l \xi_1^j\xi_2^{-j\varkappa}h_j(\xi)=0$ for $\xi\neq 0$.
	
	Mainly by combining the techniques from \cite{KisSid} and \cite{KMSpap}, we are going to prove Theorem 1, which seems to be the most general in this context.
	
	\section{Some technical simplifications}
	
	\subsection{Rotation of the hyperplane}
	
	First of all, we note that we can do the same procedures as in \cite{KMSpap} in order to rotate the hyperplane $\Lambda$ and to modify the collection $\TT$. We describe these procedures without going into details. All the proofs of the purely technical statements may be found in \cite{KMSpap} and we refer to them in the text.
	
	A distribution $F$ on $\T^n$ is called proper if $\hat{F}(m)=0$ whenever $m_j=0$ for some $j$. For example $z_1^{m_1}\ldots z_n^{m_n}$ is proper if $m_j\neq 0$ for every $j$. The space $C_0^\TT(\T^n)$ is the subspace of $C^\TT(\T^n)$ consisting of proper functions. It is a complemented subspace of $C^\TT (\T^n)$ and therefore it suffices to prove Theorem 1 with $C_0^\TT(\T^n)$ instead of $C^\TT(\T^n)$.
	
	Next, we ``slightly rotate" the hyperplane $\Lambda$ in such a way that its intersection with the set of multiindices corresponding to the differential monomials involved in the operators in $\TT$ does not change, it is still admissible and intersects the semiaxes at rational points. After that, this hyperplane (and the entire configuration) can be shifted by some vector with nonnegative integer coordinates in such a way that the shifted hyperplane intersects the positive semiaxes at integral points. This shift corresponds to multiplication of all operators in $\TT$ by some monomial $D^\beta$. The details of these procedures are described in \cite[pp. 3237--3238]{KMSpap}. We only mention that such rotation of $\Lambda$ is an elementary geometrical procedure and that the following spaces are isomorphic:
	$$
	C_0^\TT(\T^n) \qquad \hbox{and} \qquad C_0^{\{D^\beta T_1,\ldots, D^\beta T_J\}}(\T^n).
	$$
	The isomorphism is given by the map $f\mapsto D^\beta f$. Therefore, these operations do not change anything. Besides that, using this multiplication by $D^\beta$ we can ensure that the senior part of any operator from $\TT$ does not contain monomials of the form $\partial_j^{m_j}$.
	
	From now on we assume that the hyperplane $\Lambda$ intersects positive semiaxes at integral points and hence its equation can be written in the form $x\cdot a = k$, where $a=(a_1, a_2, \ldots, a_n)$ is a fixed vector with positive integer coordinates and $k$ is a natural number. Without loss of generality we also assume that $a_1\geq a_2\geq\ldots \geq a_n>0$.
	
	\subsection{Modification of the collection $\TT$}
	
	Now we write any operator $T_j \in \TT$ in the form $T_j=\sum_{a\cdot \alpha \leq k} c_{\alpha j}D^\alpha$. Then the senior part can be written as follows: $\sigma_j = \sum_{a\cdot \alpha = k} c_{\alpha j}D^\alpha$ and the junior part equals $\tau_j=T_j-\sigma_j=\sum_{a\cdot \alpha < k} c_{\alpha j}D^\alpha$.
	
	We denote by $\Pi_j$ the characteristic polynomial of $\sigma_j$. To be more precise, $\Pi_j$ is the following polynomial:
	$$
	\Pi_j(\xi_1, \ldots, \xi_n) = \sum_{a\cdot \alpha = k} c_{\alpha j}(2\pi i \xi_1)^{\alpha_1}\ldots (2\pi i \xi_n)^{\alpha_n},
	$$
	that is, $\Pi_j$ is a polynomial on $\R^n$ whose restriction to $\mathbb{Z}^n$ is $\widehat{\sigma_j}$. Clearly, since there are at least two linearly independent operators among $\sigma_j$, at least two of these polynomials are linearly independent, too. However, we will need the following stronger statement from \cite{KMSpap}.
	
	\begin{fact}
		Without loss of generality, we may assume that the following two polynomials of two variables are linearly independent:
		$$
		\Pi_1(\xi_1^{a_1}, \xi_2^{a_2}, \xi_2^{a_3},\ldots, \xi_2^{a_n}) \quad \hbox{and} \quad \Pi_2(\xi_1^{a_1}, \xi_2^{a_2}, \xi_2^{a_3},\ldots, \xi_2^{a_n}).
		$$
	\end{fact}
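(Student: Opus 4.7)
The plan is to reduce the statement to a linear-algebra question about a substitution map, exploiting two natural freedoms. First, replacing the collection $\TT$ by any other collection $\TT'$ obtained from $\TT$ via an invertible linear transformation of the operators yields an equivalent seminorm $\|\cdot\|_\TT$ and hence an isomorphic space $C^\TT(\T^n)$; so we may substitute for $T_1, T_2$ any two operators spanning the same $2$-dimensional subspace of $\mathrm{span}\{T_j\}$. Second, permuting the coordinates $x_1, \ldots, x_n$ (followed by relabeling to restore $a_1 \geq \ldots \geq a_n$) also gives an isomorphic problem.

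Consider the linear map
\[
\Phi\colon V := \mathrm{span}\{\sigma_1, \ldots, \sigma_J\} \to \mathbb{C}[\xi_1,\xi_2]_k, \quad \sigma \mapsto \Pi^\sigma(\xi_1^{a_1}, \xi_2^{a_2}, \ldots, \xi_2^{a_n}),
\]
where $\Pi^\sigma$ is the characteristic polynomial of $\sigma$ and $\mathbb{C}[\xi_1,\xi_2]_k$ denotes bivariate polynomials homogeneous of degree $k$ (the image lands there by weighted homogeneity, since $a\cdot\alpha = k$ for every multiindex in $\sigma$). Note that two monomials $\xi^\alpha, \xi^{\alpha'}$ in $V$ have the same image under $\Phi$ iff $\alpha_1 = \alpha'_1$, so $\ker\Phi$ is determined by linear dependencies \emph{within} the fibers of the projection $\alpha \mapsto \alpha_1$ on $\Lambda\cap\mathbb{Z}_+^n$. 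If $\dim \Phi(V)\geq 2$, the reduction is immediate: pick $v_1, v_2 \in V$ with $\Phi(v_1), \Phi(v_2)$ linearly independent, and use the corresponding linear combinations of the $T_j$'s as the new $T_1, T_2$.

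The main case is $\dim \Phi(V)\leq 1$. For each $i \in \{1,\ldots,n\}$ let $\Phi_i$ denote the analogous substitution that declares $x_i$ the distinguished coordinate (namely $\xi_i \to \xi_1^{a_i}$ and $\xi_j\to\xi_2^{a_j}$ for $j\neq i$). By the permutation freedom above, it suffices to show that $\dim\Phi_i(V)\geq 2$ for some $i$. I would prove the contrapositive: assume $\dim \Phi_i(V) \leq 1$ for every $i$ and derive that $\dim V \leq 1$, contradicting the hypothesis.

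This last step is the main obstacle. Fix $\sigma^*\in V$ with $\Phi_i(\sigma^*)\neq 0$ for some $i$; for any $\sigma\in V$ the assumption gives scalars $\lambda_i(\sigma)$ with $\Phi_i(\sigma)=\lambda_i(\sigma)\Phi_i(\sigma^*)$. Expanding by collecting terms according to each fiber $\{\alpha:\alpha_i=m\}$ of $\Lambda\cap\mathbb{Z}_+^n$, this produces, for each $(i,m)$, one linear constraint on the coefficient vector $(c_\alpha)$ of $\sigma - \lambda \sigma^*$ (after identifying the $\lambda_i$'s through the total-sum relation obtained by summing each of these equations over $m$). The plan is to use the simplifications arranged in Section~2.1 --- in particular that no senior part contains a pure monomial $\partial_j^{m_j}$, so the multiindex set on $\Lambda$ stays away from the coordinate axes --- and the assumed ordering $a_1\geq\ldots\geq a_n$ to show that the combined system of these constraints, as $i$ runs through all coordinates, forces $(c_\alpha)$ to be proportional to the coefficient vector of $\sigma^*$. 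This combinatorial/linear-algebra analysis on the fiber partitions of $\Lambda\cap\mathbb{Z}_+^n$ is the technical heart of the argument, and is the step where I expect the work to lie.
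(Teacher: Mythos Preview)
The paper does not prove Fact~2 itself; it defers to Lemma~2.3 of \cite{KMSpap} and records only that the argument there needs two operations: modifying $\TT$ within its linear span, and \emph{passing to a complemented subspace of $C^\TT(\T^n)$}. Your proposal keeps the first operation but replaces the second by permutation of coordinates. That replacement is where the plan breaks down.

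Your permutation freedom is smaller than you claim. The ordering $a_1\ge a_2\ge\cdots\ge a_n$ is not cosmetic: the rest of the paper uses $\varkappa=a_1/a_2\ge 1$ in Fact~1 and $a_j\le a_2$ for $j\ge 3$ in Subsection~3.4.1. If you permute so that coordinate $i$ becomes the first one and then ``relabel to restore $a_1\ge\cdots\ge a_n$'', the relabeling simply undoes the permutation whenever the weights are distinct; you are back to $\Phi_1$. So the reduction ``it suffices that $\dim\Phi_i(V)\ge 2$ for some $i$'' is not justified by permutation alone. The operation the cited proof actually uses --- passing to a complemented subspace (for instance, restricting to functions that live on a lower-dimensional sub-torus) --- is a genuinely stronger move: it can change the ambient dimension $n$ and the collection $\TT$ in ways that a permutation cannot, and that is what allows one to iterate and eventually force two independent images under $\Phi_1$.

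Separately, the contrapositive you isolate (``if $\dim\Phi_i(V)\le 1$ for every $i$ then $\dim V\le 1$'') is left entirely open in your outline; you call it the technical heart but give no argument. Even granting it, without the complemented-subspace reduction you cannot return to a configuration satisfying $a_1\ge\cdots\ge a_n$ with the distinguished variable in position~1, so the statement you would prove is not the one the paper needs. The honest fix is to import the complemented-subspace machinery from \cite{KMSpap} rather than to try to get by with permutations.
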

	This statement is clear for $n=2$ since in this case we only have to renumber the polynomials and operators. For $n>2$ the proof may be found in \cite{KMSpap}, see Lemma 2.3 there. It is rather elementary but quite bulky. This proof requires the procedures of passing to a complemented subspace of $C^{\TT}(\T^n)$ and modification of the collection $\TT$ without changing its linear span. Clearly, these procedures are innocent in our context.
	
	Now we have the following linearly independent polynomials: $A(\xi_1, \xi_2)=\Pi_1(\xi_1, \xi_2^{a_2}, \xi_2^{a_3},\ldots, \xi_2^{a_n})$ and $B(\xi_1, \xi_2)=\Pi_2(\xi_1, \xi_2^{a_2}, \xi_2^{a_3},\ldots, \xi_2^{a_n})$. Both $A$ and $B$ consist of linear combination of monomials of the form $\xi_1^u \xi_2^v$ where $a_1u+v=k$. Without loss of generality, such a monomial with the greatest exponent $u$ is contained in $A$ (this means that $A$ contains the monomial $\xi_1^{u}\xi_2^{k-a_1 u}$ and for $x>u$ neither $A$ nor $B$ contains $\xi_1^x \xi_2^{k-a_1 x}$). Subtracting a multiple of $T_1$ from $T_2$, we may assume that $B$ does not contain the monomial $\xi_1^{u}\xi_2^{k-a_1u}$. Besides that, multiplying $T_1$ by a constant, we make the coefficient of $\xi_1^{u}\xi_2^{k-a_1u}$ equal to $-1$. Next, if $u_1<u$ is the greatest exponent $y$ such that the monomial $\xi_1^y \xi_2^{k-a_1 y}$ is contained in $B$, we also assume that it occurs in $B$ with coefficient $1$. Also, as above, we expel this monomial from $A$.
	
	\subsection{A manipulation with Fourier coefficients}
	
	We will need yet another technical procedure, this time not contained in \cite{KMSpap}. At this moment, our motivation might not be quite clear but we will use the results of this subsection in the future computations.
	
	For a real number $x$ we denote by $[x]$ its integral part, i.e., the greatest integer not exceeding $x$. For any collection of residues $r_3, r_4, \ldots, r_n$ modulo $1000$ consider the set $A_{r_3, \ldots, r_n}=\{q\in \mathbb{N}: q \divby 1000;\ [q^{a_3/a_2}]\equiv r_3 \  (\mathrm{mod}\  1000)\ldots, \ [q^{a_n/a_2}]\equiv r_n \  (\mathrm{mod}\  1000)\}$. The (finite) union of all these sets is obviously the set of all positive integers that are divisible by $1000$, so for at least one collection of residues $r_3, \ldots, r_n$ we have:
	$$
	\sum_{q\in A_{r_3, \ldots, r_n}} q^{-1}=\infty.
	$$
	We fix these residues $r_3, r_4, \ldots, r_n$ and denote the set $A_{r_3, \ldots, r_n}$ by $\mathcal{A}$. Next, for any Banach space $X$ of functions on $\T^n$ we denote by $\widetilde{X}$ its subspace that consists of functions $f$ on $\T^n$ having non-zero Fourier coefficients $\hat{f}(p, q, s_3, \ldots, s_n)$ only if $p$ and $q$ are divisible by $1000$ and $s_j\equiv r_j \  (\mathrm{mod}\  1000)$ for all $j\geq 3$. 
	For the sake of convenience, we put $r_1$ and $r_2$ equal to $0$. We see that the subspace $\widetilde{C}_0^{\TT}(\T^n)$ is complemented in $C_0^\TT(\T^n)$; a projection is given by convolution with an appropriate measure (that is, let $\mu$ be the measure on $\T$ such that $\widehat{\mu}=\mathbf{1}_{1000\mathbb{Z}}$ and consider the measure $\nu$ defined on $\T^n$ by $\nu=\mu\otimes\mu\otimes z_3^{r_3}\mu\otimes\ldots\otimes z_n^{r_n}\mu$). Therefore, it suffices to prove Theorem 1 for this space.
	
	\section{The main argument}
	
	\subsection{The main ideas and an outline of the proof}
	
	In this subsection we outline the proof of Theorem 1, postponing certain technical details until subsequent subsections. 
	
	Recall that the hyperplane $\Lambda=\{\alpha: \alpha\cdot a = k\}$ intersects the positive semiaxes at integral points and so the numbers $m_j= k/a_j$ are integers. Consider the following collection of differential monomials: $\HH=\{D^\alpha: \alpha\in \Lambda\}$ and the corresponding space of smooth functions $C_0^\HH(\T^n)$.
	
	\begin{fact}
		We have a continuous embedding $j: \widetilde{C}_0^\HH(\T^n)\to \widetilde{C}_0^\TT(\T^n)$.
	\end{fact}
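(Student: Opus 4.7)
The plan is to establish the inequality
$$
\max_j \|T_j f\|_{C(\T^n)} \lesssim \max_{\alpha \in \Lambda \cap \mathbb{Z}_+^n} \|D^\alpha f\|_{C(\T^n)}
$$
for trigonometric polynomials $f \in \widetilde{C}_0^{\HH}(\T^n)$; the estimate then extends to the completion. Splitting $T_j = \sigma_j + \tau_j$, the senior part is a finite linear combination of monomials $D^\alpha$ with $\alpha \in \Lambda \cap \mathbb{Z}_+^n$ and is absorbed by the triangle inequality with a constant depending only on the coefficients in $\TT$. Hence the real task is to show $\|D^\beta f\|_C \lesssim \|f\|_{\HH}$ for every multiindex $\beta \in \mathbb{Z}_+^n$ with $\beta \cdot a < k$ occurring in some $\tau_j$.

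The main tool is the following: for a proper function $g$ on $\T^n$ and any $\gamma \in \mathbb{Z}_+^n$, the Fourier multiplier $(2\pi i m)^{-\gamma}$ coincides with convolution against a tensor product of Bernoulli-type kernels, which is an $L^1(\T^n)$ function, so the corresponding antidifferentiation operator is bounded on $C(\T^n)$ restricted to proper functions. Applying this with $\gamma = M_{j_0} e_{j_0}$, where $M_{j_0} = k/a_{j_0} \in \mathbb{Z}$ and $M_{j_0} e_{j_0}$ is a vertex of the simplex $\Lambda \cap \mathbb{R}_+^n$, yields $\|f\|_C \lesssim \|\partial_{j_0}^{M_{j_0}} f\|_C \leq \|f\|_{\HH}$. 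More generally, $\|D^\gamma f\|_C \lesssim \|D^{\alpha'} f\|_C$ whenever $\alpha' \in \Lambda \cap \mathbb{Z}_+^n$ and $\alpha' - \gamma \in \mathbb{Z}_+^n$.

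Since $\beta$ lies strictly below $\Lambda$, one has $\beta_j < M_j$ for every $j$, and so $\beta = \sum_{j=1}^{n} \lambda_j M_j e_j$ with $\lambda_j = \beta_j / M_j \in [0, 1)$ and $\lambda_0 := 1 - \sum_j \lambda_j > 0$. The multiparameter Landau--Kolmogorov-type interpolation inequality
$$
\|D^\beta f\|_C \lesssim \prod_{j=1}^{n} \|\partial_j^{M_j} f\|_C^{\lambda_j} \cdot \|f\|_C^{\lambda_0},
$$
obtained by applying Hadamard's three-lines theorem to the analytic family $z \mapsto D^{\sum_j z_j M_j e_j} f$ on the polystrip $\{z \in \mathbb{C}^n : \operatorname{Re} z_j \geq 0,\ \sum_j \operatorname{Re} z_j \leq 1\}$ (whose $L^\infty$-boundary bounds reduce to the Bernoulli-kernel convolution estimates of the previous paragraph), then combines with the already-established bounds on $\|f\|_C$ and on $\|\partial_j^{M_j} f\|_C$ to yield $\|D^\beta f\|_C \lesssim \|f\|_{\HH}$.

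The main obstacle to a more direct approach is a lattice-arithmetic one: in general no single $\alpha \in \Lambda \cap \mathbb{Z}_+^n$ satisfies $\alpha \geq \beta$ componentwise (for instance, with $a = (2, 3)$, $k = 6$, and $\beta = (1, 1)$, neither integer point $(3, 0)$ nor $(0, 2)$ of $\Lambda$ dominates $\beta$), so the ``trivial'' one-step Bernoulli convolution does not suffice and the combined vertex-wise interpolation argument above is what circumvents this. Verifying the three-lines bound in the $C$-norm setting requires some care because $C(\T^n)$ is not reflexive, but the explicit integral formulas for the antidifferentiation operators and straightforward Phragmén--Lindelöf estimates on the analytic family make this routine.
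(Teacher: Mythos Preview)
Your overall strategy matches the paper's: split $T_j=\sigma_j+\tau_j$, absorb the senior part by the triangle inequality, and control every junior derivative $D^\beta$ (with $a\cdot\beta<k$) by the pure vertex derivatives $\partial_l^{m_l}$. The paper handles this last step in one line by quoting \cite[Theorem~9.5]{BesIlNik}, which gives directly
\[
\max_{a\cdot\beta<k}\|D^\beta p\|_{C(\T^n)}\lesssim\sum_{l=1}^n\|\partial_l^{m_l}p\|_{C(\T^n)}.
\]

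Your attempt to derive this inequality via Hadamard's three-lines theorem, however, has a real gap. On the boundary of your polystrip the operators in the analytic family are \emph{imaginary-order} differentiations: Fourier multipliers whose symbols are products of factors of the form $(2\pi i\,\cdot)^{it}$. Such multipliers are bounded on $L^p(\T^n)$ for $1<p<\infty$ by the Mikhlin theorem, but they are \emph{not} bounded on $C(\T^n)$ or $L^\infty(\T^n)$; already in one variable the sequence $|\cdot|^{it}$ fails to be the Fourier--Stieltjes transform of a finite measure on $\T$, so it is not an $L^\infty$-multiplier. The Bernoulli-kernel argument you invoke covers only nonnegative-integer antidifferentiation $D^{-\gamma}$ with $\gamma\in\mathbb{Z}_+^n$ and says nothing about complex powers. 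Hence the ``$L^\infty$-boundary bounds'' you need are simply unavailable, and this is not a Phragm\'en--Lindel\"of growth issue that damping can repair --- it is a failure of the endpoint operator norms themselves. (This is also why the classical proofs of Kolmogorov--Landau inequalities in the uniform norm proceed via Taylor expansions and integral representations rather than complex interpolation.)

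The multiplicative inequality you write down is nonetheless true, and for proper $f$ your own observation $\|f\|_C\lesssim\|\partial_l^{m_l}f\|_C$ makes it equivalent to the additive form displayed above. The cleanest fix is to replace the three-lines step by a citation, exactly as the paper does.
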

\begin{proof}
	Note that
	$$
	\|p\|_{\widetilde{C}_0^\HH(\T^n)}=\sum_{a\cdot\beta=k}\|D^\beta p\|_{C(\T^n)}\geq \sum_{l=1}^n \|\partial_l^{m_l}p\|_{C(\T^n)}, 
	$$
	and, by \cite[Theorem 9.5]{BesIlNik} the following inequality holds:
	$$
	\max_{a\cdot\beta<k}\|D^\beta p\|_{C(\T^n)}\lesssim \sum_{l=1}^n \|\partial_l^{m_l} p\|_{C(\T^n)}. 
	$$
	Hence we have:
	$$
	\|p\|_{\widetilde{C}_0^\TT(\T^n)}\lesssim \sum_{a\cdot\beta=k}\|D^\beta p\|_{C(\T^n)}+\sum_{a\cdot\beta < k}\|D^\beta p\|_{C(\T^n)}\lesssim 	\|p\|_{\widetilde{C}_0^\HH(\T^n)}.
	$$
\end{proof}	
	
	Next, we simply consider the canonical map $i:\widetilde{C}_0^\TT(\T^n)\to \widetilde{C}_0^{T_1, T_2}(\T^n)$ (the latter space here is defined in a clear way). Recall that the senior parts of the operators $T_1$ and $T_2$ are linearly independent.
	
	After that, we have one more embedding $g: \widetilde{C}_0^{T_1, T_2}(\T^n)\to \widetilde{W}_1^{T_1, T_2}(\T^n)$. We see that $g$ is a $1$-summing operator (this follows easily from Pietsch factorization theorem, see \cite[p.203]{Wojt}; the operator $g$ is a part of the canonical embedding of $C(\T^n)\oplus C(\T^n)$ into $L^1(\T^n)\oplus L^1(\T^n)$).
	
	The next step of the proof is to construct a more sophisticated operator $s$ from the space $\widetilde{W}_1^{T_1, T_2}(\T^n)$ into $L^2(\mathbb{R}^2; |\xi_2|^{\frac{a_1}{a_2}-1})$. This construction may be found in Subsection 3.2. The main ingredient of the construction is Fact 1. In this way, we have the following diagram:
	$$
	\widetilde{C}_0^\HH(\T^n)\xrightarrow{j}\widetilde{C}_0^\TT(\T^n)\xrightarrow{i}\widetilde{C}_0^{T_1, T_2}(\T^n)\xrightarrow{g} \widetilde{W}_1^{T_1, T_2}(\T^n)\xrightarrow{s} L^2(\R^2; |\xi_2|^{\frac{a_1}{a_2}-1}).
	$$
	
	At this moment, we should apply some facts from the Banach space theory. The subsequent procedure is essentially the same as in the paper \cite{KisSid}.
	
	Suppose that the space $\widetilde{C}_0^\TT(\T^n)$ has local unconditional structure. Then we apply the following fact (for the proof, see \cite{GorLew} or \cite[Sec. 23]{Pietsch}).
	\begin{fact}
		Let $X$ be a Banach space with local unconditional structure. Then every $1$-summing operator $T$ from $X$ to an arbitrary Banach space $Y$ can be factorized through the space $L^1$, i.e., there is a measure $\mu$ and operators $V:X\to L^1(\mu)$ and $U: L^1(\mu)\to Y^{**}$ such that $UV = \kappa T$, where $\kappa:Y\to Y^{**}$ is the canonical embedding. 
	\end{fact}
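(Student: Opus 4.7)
The plan is to combine two classical ingredients: the Gordon--Lewis structural characterization, which identifies Banach spaces with local unconditional structure (at the level of the bidual) as complemented subspaces of Banach lattices, together with the factorization of $1$-summing operators out of a Banach lattice through $L^{1}$. The first reduction is to pass to the bitranspose: $T^{**}\colon X^{**}\to Y^{**}$ is again $1$-summing with the same summing norm, so it suffices to produce a factorization of $T^{**}$ and then precompose with the canonical embedding $\kappa_{X}\colon X\to X^{**}$; restricting $U$ back to $X$ will still yield $UV=\kappa_{Y}T$.

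The second step is to use the hypothesis on $X$ to realize $X^{**}$ as a complemented subspace of a Banach lattice $Z$: one produces $i\colon X^{**}\to Z$ and $p\colon Z\to X^{**}$ with $p\circ i=\mathrm{id}_{X^{**}}$ and $\|i\|\cdot\|p\|\lesssim C$, where $C$ is the l.u.st. constant of $X$. I would derive this by an ultraproduct argument: the finite-dimensional factorizations $R_{F}\colon F\to E_{F}$, $S_{F}\colon E_{F}\to X$ given by the definition have $E_{F}$ equipped with a canonical Banach lattice structure coming from the $1$-unconditional basis, and an ultraproduct $Z=\prod_{F}E_{F}/\mathcal{U}$, along a suitable ultrafilter on the directed set of finite-dimensional subspaces of $X$, inherits a Banach lattice structure. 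The local maps $R_{F}$ and $S_{F}$ assemble into $i$ and $p$ on the bidual.

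The third and hardest step is to show that $S:=T^{**}\circ p\colon Z\to Y^{**}$, being $1$-summing from a Banach lattice, factors through $L^{1}$. The starting point is Pietsch's domination theorem: there is a probability measure $\nu$ on the weak-$*$-compact unit ball $B_{Z^{*}}$ with $\|Sz\|\le \pi_{1}(S)\int_{B_{Z^{*}}}|\langle z,z^{*}\rangle|\,d\nu(z^{*})$ for all $z\in Z$. The lattice structure then permits the replacement of this integral by the action of a single positive functional on $|z|$: using that $Z^{*}$ is Dedekind complete, one sets $\varphi:=\int|z^{*}|\,d\nu(z^{*})\in Z^{*}_{+}$ (a Bochner integral inside the order-complete lattice $Z^{*}$) and obtains $\|Sz\|\le \pi_{1}(S)\,\varphi(|z|)$. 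The abstract $L^{1}$-space associated with $\varphi$, that is, the completion of $Z$ modulo the kernel of $z\mapsto\varphi(|z|)$ under this seminorm (identified with some $L^{1}(\mu)$ via Kakutani's representation), is the desired target, supplying $V_{0}\colon Z\to L^{1}(\mu)$ and an extension by density $U\colon L^{1}(\mu)\to Y^{**}$ with $UV_{0}=S$.

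Setting $V:=V_{0}\circ i\circ \kappa_{X}\colon X\to L^{1}(\mu)$ and keeping $U$ as above, one obtains $UV=T^{**}\circ\kappa_{X}=\kappa_{Y}\circ T$. The main obstacle is clearly the third step: producing an honest $L^{1}$-factor on all of $Z$ requires lattice-theoretic machinery (Dedekind completeness of $Z^{*}$, existence of the modulus of functionals, and Kakutani representation) that is unavailable in a general Banach space. This is the precise point at which the hypothesis of local unconditional structure is consumed, and it is also the reason why the ultraproduct construction in the second step, which delivers an \emph{actual} Banach lattice rather than a mere space with l.u.st., is unavoidable.
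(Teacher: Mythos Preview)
The paper does not supply its own proof of this fact; it simply quotes the result and refers the reader to Gordon--Lewis \cite{GorLew} and to \cite[Sec.~23]{Pietsch}. Your outline is precisely the standard argument found in those references (pass to the bidual, embed complementably into a Banach lattice via an ultraproduct of the local unconditional spaces, and then factor a $1$-summing operator out of a lattice through an abstract $L^{1}$-space using Pietsch domination and Kakutani's representation), so in spirit there is nothing to compare.

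Two small technical points are worth tightening. First, the ultraproduct construction as you describe it produces maps $i\colon X\to Z$ and $p\colon Z\to X^{**}$ with $p\circ i=\kappa_{X}$, not an embedding of $X^{**}$ itself into $Z$; but since in Step~4 you only ever use the composite $i\circ\kappa_{X}$, this is harmless --- you may simply drop the claim that $i$ is defined on all of $X^{**}$, or else pass to $Z^{**}$. Second, writing $\varphi=\int_{B_{Z^{*}}}|z^{*}|\,d\nu(z^{*})$ as a \emph{Bochner} integral is delicate, because the modulus map $z^{*}\mapsto|z^{*}|$ need not be strongly measurable for the norm topology on $Z^{*}$. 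The correct formulation is scalar: for $z\ge 0$ the function $z^{*}\mapsto\langle z,|z^{*}|\rangle=\sup_{|w|\le z}|\langle w,z^{*}\rangle|$ is a supremum of weak-$*$ continuous functions, hence Borel, and one defines $\varphi(z):=\int\langle z,|z^{*}|\rangle\,d\nu(z^{*})$ on the positive cone and extends by linearity. With these adjustments your argument is complete.
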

	
	It gives us the following diagram:
	
	\begin{tikzcd}
	\widetilde{C}_0^\HH(\T^n) \arrow[r, "j"] & \widetilde{C}_0^\TT(\T^n) \arrow[r, "i"] \arrow[rd, "V"] & \widetilde{C}_0^{T_1, T_2}(\T^n) \arrow[r, "g"]  & \widetilde{W}_1^{T_1, T_2}(\T^n) \arrow[d, "s"] \\
	&                                  & L^1(\mu) \arrow[r, "U"] & L^2(\R^2; |\xi_2|^{\frac{a_1}{a_2}-1})              
	\end{tikzcd}
	
	After that we consider the dual diagram.
	
	
	Next, we construct an operator $P: \widetilde{C}_0^\HH (\T^n)^* \rightarrow W_{1/2}^\HH(\T^n)$. This construction is described in Subsection 3.3. Its main idea is to view the space $\widetilde{W}_2^\HH (\T^n)$ as a subspace of $L^2(\T^n)\oplus\ldots\oplus L^2(\T^n)=\bigoplus_{\alpha\in\Lambda}L^2(\T^n)$. Then $P$ is an orthogonal projection onto this subspace. It turns out that $P$ also acts from $\widetilde{C}_0^\HH (\T^n)^*$ to $W_{1/2}^\HH(\T^n)$ and it gives us the following diagram:
	
	\begin{equation}
	\label{diag_fin1}
	\begin{tikzcd}
	\widetilde{C}_0^\HH(\T^n)^* \arrow[d, "P"'] & \widetilde{C}_0^\TT(\T^n)^* \arrow[l, "j^*"'] & {\widetilde{C}_0^{T_1, T_2}(\T^n)^*} \arrow[l, "i^*"'] & {\widetilde{W}_1^{T_1, T_2}(\T^n)^*} \arrow[l, "g^*"']                     \\
	W_{1/2}^\HH(\T^n)                                             &                                               & L^\infty(\mu) \arrow[lu, "V^*"']                       & L^2(\R^2; |\xi_2|^{\frac{a_1}{a_2}-1}) \arrow[u, "s^*"'] \arrow[l, "U^*"']
	\end{tikzcd}
	\end{equation}
	
	Now, we argue as follows. We note that $L^\infty(\mu)$ is a space of type $C(K)$ (for some not necessarily metric compact $K$) and $W_{1/2}^\HH(\T^n)$ is a quasi-Banach space of cotype $2$. Then, by a generalization of Grothendieck theorem (it was proven for example in \cite{KislOp}), the operator $P j^* V^*$ is $2$-summing. Therefore, the operator $P j^* i^* g^* s^*$ is $2$-summing, too. By definition this operator should transform weakly $2$-summable sequences in $ L^2(\R^2; |\xi_2|^{\frac{a_1}{a_2}-1})$ into $2$-summable sequences in $W_{1/2}^\HH(\T^n)$. We will exhibit a counterexample, which leads to a contradiction. These computations are presented in Subsection 3.4 and after that the proof of the absence of local unconditional structure in the space $\widetilde{C}_0^\TT(\T^n)$ is finished.
	
	Let us now pass to the second assertion of Theorem 1. Suppose on the contrary that $\widetilde{C}_0^\TT (\T^n)^*$ is isomorphic to a closed subspace of a space $Y$ with local unconditional structure and that $Y$ does not contain the spaces $\ell_\infty^k$ uniformly.
	
	We denote the operator of isometric embedding of $C_0^\TT(\T^n)^*$ into $Y$ by $R$. The space $Y^*$ also has local unconditional structure. Therefore, we may apply Fact 4 to the operator $(sgi)^{**}R^*$ (it is also $1$-summing by local reflexivity principle; see, for instance, \cite[Sec. 28.1]{Pietsch}); hence $i^* g^* s^* R^{**}$ factors through $L^\infty(\mu)$ and we get the following diagram.
	
	\begin{equation}
	\label{diag_fin2}
	\begin{tikzcd}
	\widetilde{C}_0^\HH(\T^n)^* \arrow[d, "P"'] & \widetilde{C}_0^\TT(\T^n)^* \arrow[l, "j^*"'] \arrow[d, "\kappa R"'] & {\widetilde{C}_0^{T_1, T_2}(\T^n)^*} \arrow[l, "i^*"'] & {\widetilde{W}_1^{T_1, T_2}(\T^n)^*} \arrow[l, "g^*"']                   \\
	W_{1/2}^\HH(\T^n)                                         & Y^{**}                                                               & L^\infty(\mu) \arrow[l, "T"']                          & L^2(\R^2; |\xi_2|^{\frac{a_1}{a_2}-1}) \arrow[u, "s^*"'] \arrow[l, "S"']
	\end{tikzcd}
	\end{equation}
	
	Here $\kappa: Y\to Y^{**}$ is the canonical embedding. We apply the local reflexivity principle once again to conclude that the space $Y^{**}$ does not contain the spaces $\ell_\infty^k$ uniformly (since the same is true for $Y$). Now we use the following fact.
	
	\begin{fact}
		Let $Y$ be a Banach space that does not contain the spaces $\ell_\infty^k$ uniformly. Then there exists a number $p$, $2\leq p<\infty$ such that any operator $T: C(K)\rightarrow Y$ is $p$-summing \emph(and $\pi_p (T)\lesssim \|T\|$\emph).
	\end{fact}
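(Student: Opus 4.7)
The plan is to combine two classical results of Banach space theory. First, invoke the Maurey--Pisier theorem: a Banach space $Y$ fails to contain the spaces $\ell_\infty^k$ uniformly if and only if $Y$ has some finite cotype $q \in [2,\infty)$, meaning that there is a constant $C_q$ with
\[
\Bigl(\sum_{i=1}^N \|y_i\|^q\Bigr)^{1/q}\le C_q \int_0^1 \Bigl\|\sum_{i=1}^N r_i(t)y_i\Bigr\|\,dt
\]
for every finite collection $(y_i)\subset Y$, where $(r_i)$ is the Rademacher system. The implication we need, namely that absence of uniform $\ell_\infty^k$'s forces finite cotype, is the deeper half of Maurey--Pisier; its standard proof isolates a ``worst-cotype direction'' in $Y$ by an ultraproduct or compactness argument on the unit ball of finite-dimensional subspaces.

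Second, with $Y$ of cotype $q$ in hand, appeal to Maurey's extension of Grothendieck's theorem: every bounded operator $T \colon C(K) \to Y$ into a space of cotype $q$ is $p$-summing for some $p \ge q$, with $\pi_p(T) \lesssim \|T\|$. The strategy is to represent $T$, via the Riesz representation theorem, by a $Y$-valued Radon measure on $K$, and then to construct a Pietsch dominating (scalar) measure by averaging the cotype inequality against random linear combinations of point evaluations. This is the same Pietsch-domination scheme as in the proof of Grothendieck's theorem, with the cotype inequality playing the role that Khintchine's inequality plays in the Hilbert case; the constant appearing on the right-hand side is automatically a multiple of $\|T\|$. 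One may take $p = 2$ when $q = 2$ (the classical Grothendieck-Maurey case) and any $p > q$ when $q > 2$, which suffices since only $p \in [2,\infty)$ is required.

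The main obstacle is the Maurey--Pisier theorem itself, which is genuinely nontrivial and in practice is cited as a black box. Granted that, the passage from finite cotype to the $p$-summing bound on operators from $C(K)$ is a direct Pietsch-type domination argument, and the estimate $\pi_p(T)\lesssim\|T\|$ drops out of the construction of the dominating measure. A small additional point of care is the choice of $p$: for a concrete value one could take $p=\max(q,2)+\varepsilon$ for any $\varepsilon>0$, since it is only the existence of some such $p$, and not its optimality, that is needed in the application of Fact 5 to the diagram (\ref{diag_fin2}).
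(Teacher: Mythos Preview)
Your outline is correct and is essentially the content of the cited reference: the paper does not prove Fact~5 at all but simply refers to \cite{PisMaur}, and the two-step argument you describe (Maurey--Pisier to obtain finite cotype, then Maurey's Pietsch-domination argument to pass from cotype~$q$ to $p$-summability of operators out of $C(K)$) is exactly what that paper contains. No further comparison is needed.
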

	
	The proof of this fact may be found in the paper \cite{PisMaur}. We conclude that the operator $T$ is $p$-summing for some $p\geq 2$. Then the operator $TS$ is also $p$-summing and the operator $i^*g^*s^*$ is $p$-summing as a part of a $p$-summing operator. Therefore, $P j^* i^* g^* s^*$ is a $p$-summing operator from a Banach space to a quasi-Banach space of cotype $2$. We may conclude that it is $2$-summing (this fact may also be found in \cite{KislOp}).
	
	Therefore, if we prove that the operator $P j^* i^* g^* s^*$ is not $2$-summing, then this leads us to a contradiction and to the proof of the second assertion of Theorem 1. We again refer the reader to Subsection 3.4 where direct computations are presented and it is proved that the operator $P j^* i^* g^* s^*$ is not $2$-summing.
	
	In the next three subsections all omitted details of our proof are given.
	
	\subsection{Construction of an operator from $\widetilde{W}_1^{T_1, T_2}(\T^n)$ into a Hilbert space}
	
	Now we describe the construction of the operator $s$ from $\widetilde{W}_1^{T_1, T_2}(\T^n)$ to a Hilbert space, namely, $L^2(\mathbb{R}^2; |\xi_2|^{\frac{a_1}{a_2}-1})$. The main tool here will be the embedding theorem from \cite{KMSpap}, namely, Fact 1.
	
	For any function $f\in \widetilde{W}_1^{T_1, T_2}(\T^n)$ we consider the pair of functions $(f_1, f_2)=(T_1 f, T_2 f)$ which lies in $L^1(\T^n)\oplus L^1(\T^n)$. We can easily eliminate the terms $z_1^{k_1}\ldots z_n^{k_n}$ with $k_i\le 1000$ for some $i$ from the function $f$ (and therefore $f_1$ and $f_2$) by taking the 1001st remainder of its Fourier series with respect to each variable (this is a continuous linear operation). Now take a function $\psi \in C_0^\infty (\R^n)$ such that $0\leq \psi \leq 1$ everywhere, $\psi(\xi)=1$ when $\max_j |\xi_j|\leq 10$ and $\psi (\xi)=0$ when $\max_j |\xi_j|>20$. Next, we put $\phi=\check{\psi}$ and extend the functions $f_1$ and $f_2$ to $\R^n$ periodically (we use the same notation for these extended functions).
	
	The Fourier transforms of all derivatives of $\phi$ are supported on the cube with the center at the origin and side length $40$. Besides that, the functions $f_1$ and $f_2$ on the torus have no spectrum in the cube of side length $2000$ centered at the origin. Therefore, when we extend the functions $f_1$ and $f_2$ periodically to $\R^n$ and multiply them by the derivatives of $\phi$, we certainly get functions whose Fourier transform vanishes on the ball $\{\xi: |\xi|\leq 100\}$. We denote the space of functions in $L^1(\R^n)$ with this property by $L^1_\nul(\R^n)$. 
	
	We note that obviously the functions $f_1$ and $f_2$ satisfy the following equation: $-T_1 f_2 + T_2 f_1 = 0$. Now, consider the expression $ -T_1(\phi f_2)+T_2(\phi f_1)$. It is not difficult to see that we can regroup the terms and rewrite it in the following way:
	$$
	-T_1(\phi f_2)+T_2(\phi f_1) = \phi(-T_1 f_2 + T_2 f_1)+\sum_{a\cdot \alpha < k} b_\alpha D^\alpha \gamma_\alpha=\sum_{a\cdot \alpha < k} b_\alpha D^\alpha \gamma_\alpha.
	$$
	
	Here, $\gamma_\alpha$ is equal to some linear combination of products of functions $f_j$ and certain derivatives of $\phi$. Therefore, these functions depend linearly and continuously on $f$. For $j=1,2$ writing $T_j$ as $\sigma_j+\tau_j$, we obtain the following equation:
	$$
	-\sigma_1(\phi f_2)+\sigma_2(\phi f_1) +\hbox{junior terms}=0.
	$$
	The ``junior terms" here are the expressions of the form $D^{\alpha}\mu_\alpha$ (with $\alpha\cdot a < k$) and $\mu_\alpha$ is a linear combination of products of functions $f_j$ and certain derivatives of $\phi$.
	
	Now we are going to get rid of the junior terms precisely like this was done in \cite{KMSpap}.  We are going to use the following multiplier theorem, which is not difficult and was proven in \cite{KMSpap}.
	
	\begin{fact}
		Suppose that $u$ is a function on $\R^n\setminus \{0\}$ such that for all $t>0$ and for some positive numbers $b_1, \ldots, b_n$ and $\gamma$ we have:
		$$
		u(t^{b_1}\xi_1, t^{b_2}\xi_2 \ldots, t^{b_n}\xi_n)=t^{-\gamma}u(\xi_1, \xi_2, \ldots, \xi_n).
		$$
		Then the Fourier multiplier $M_u$ with the symbol $u$ \emph(that is, the operator taking $f$ to $\mathcal{F}^{-1}[u\mathcal{F}f]$\emph) is a bounded linear operator on $L^1_\nul(\R^n)$.
	\end{fact}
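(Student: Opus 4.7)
The plan is to reduce the proof to showing that a suitably cut-off version of the symbol has an $L^1$ inverse Fourier transform, so that $M_u$ acts by convolution against an $L^1$ kernel. Concretely, choose a smooth cutoff $\chi$ on $\R^n$ with $\chi(\xi)=0$ for $|\xi|\leq 50$ and $\chi(\xi)=1$ for $|\xi|\geq 100$. Since every $f\in L^1_\nul(\R^n)$ has $\widehat{f}$ vanishing on $\{|\xi|\le 100\}$, we have $u\widehat{f}=(u\chi)\widehat{f}$, and therefore
$$
M_u f = \mathcal{F}^{-1}[(u\chi)\widehat{f}] = (u\chi)^\vee * f.
$$
So it suffices to prove $(u\chi)^\vee\in L^1(\R^n)$; this simultaneously yields that $M_u f\in L^1(\R^n)$ and that the spectrum of $M_u f$ is contained in that of $f$, so $M_u f\in L^1_\nul(\R^n)$.

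Next, I would split $u\chi$ into anisotropic dyadic pieces. Fix a nonnegative $\eta\in C_0^\infty(\R^n\setminus\{0\})$ supported in a ``unit anisotropic annulus'' $\{1\leq\sum_j|\xi_j|^{1/b_j}\leq 2\}$ with $\sum_{k\in\mathbb{Z}}\eta(2^{-kb_1}\xi_1,\ldots,2^{-kb_n}\xi_n)=1$ on $\R^n\setminus\{0\}$. Then
$$
u(\xi)\chi(\xi) = \sum_{k\geq k_0} u(\xi)\eta(2^{-kb_1}\xi_1,\ldots,2^{-kb_n}\xi_n)
$$
for some finite $k_0$ depending only on the support of $\chi$. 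Using the homogeneity hypothesis with $t=2^{-k}$,
$$
u(\xi)\eta(2^{-kb_1}\xi_1,\ldots,2^{-kb_n}\xi_n) = 2^{-k\gamma}\,v(2^{-kb_1}\xi_1,\ldots,2^{-kb_n}\xi_n),
$$
where $v(\xi):=u(\xi)\eta(\xi)$ belongs to $C_0^\infty(\R^n\setminus\{0\})$ (this is the point where I would need to assume, as is standard in this sort of statement, that $u$ is smooth away from the origin — the intended applications in the paper are to smooth symbols such as ratios of polynomials restricted to annuli, so this is harmless).

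Taking inverse Fourier transforms and using the dilation identity yields
$$
(u\chi)^\vee(x) = \sum_{k\geq k_0} 2^{k(b_1+\ldots+b_n-\gamma)}\,\check{v}(2^{kb_1}x_1,\ldots,2^{kb_n}x_n).
$$
Since $\check{v}$ is Schwartz, the substitution $y_j=2^{kb_j}x_j$ shows that the $L^1$ norm of the $k$-th summand equals $2^{-k\gamma}\|\check{v}\|_1$. As $\gamma>0$, the geometric series $\sum_{k\geq k_0}2^{-k\gamma}$ converges, and therefore $(u\chi)^\vee\in L^1(\R^n)$ with norm bounded by a constant depending only on $u$, $\gamma$ and the $b_j$'s. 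By Young's inequality, $\|M_u f\|_1\leq\|(u\chi)^\vee\|_1\|f\|_1$, finishing the proof.

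The main technical point is really the smoothness of $u$ away from the origin, which makes $v=u\eta$ a test function and thereby gives the Schwartz bound on $\check{v}$ that is needed to cash out the anisotropic dyadic decomposition; once this is granted, everything reduces to summing a geometric series. Since the alternative is purely combinatorial bookkeeping of the symbols actually arising, no conceptual obstacle remains.
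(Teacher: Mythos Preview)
The paper does not prove Fact~6 at all; it merely cites \cite{KMSpap}. Your argument---cut off near the origin, perform an anisotropic Littlewood--Paley decomposition, use the homogeneity to reduce every piece to a single dilated copy of a fixed Schwartz kernel, and sum the resulting geometric series $\sum_{k\ge k_0}2^{-k\gamma}$---is the standard route and is correct.

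Two small remarks. First, as you already flag, smoothness of $u$ away from the origin is genuinely needed (the stated hypothesis in the paper is silent on regularity); since every symbol to which Fact~6 is later applied is a rational function smooth off the coordinate hyperplanes, this is harmless. Second, the identity $u\chi=\sum_{k\ge k_0}u\,\eta(2^{-kb_1}\xi_1,\dots,2^{-kb_n}\xi_n)$ is not literally true for an arbitrary smooth $\chi$ with transition zone $\{50<|\xi|<100\}$: finitely many anisotropic shells may meet that zone. The cleanest fix is simply to \emph{take} $\chi:=\sum_{k\ge k_0}\eta(2^{-kb_1}\cdot,\dots,2^{-kb_n}\cdot)$ from the outset, choosing $k_0$ so that this sum equals $1$ on $\{|\xi|\ge 100\}$; then your displayed identity is exact and the rest of the proof goes through verbatim.
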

	
	Recall that we have junior terms of the form $D^\beta \eta$, where $\eta$ is a function in $L^1_\nul$ and $a\cdot \beta < k$, which means that
	$$
	\frac{\beta_1}{m_1}+\ldots+\frac{\beta_n}{m_n}<1.
	$$
	Consider the Fourier multiplier $R_j$ with the following symbol:
	\begin{multline*}
	u_j(\xi_1, \ldots, \xi_n)=\\
	\frac{(2\pi i\xi_1)^{\beta_1} \ldots (2\pi i\xi_{j-1})^{\beta_{j-1}} (2\pi i\xi_j)^{\beta_j+3m_j} (2\pi i\xi_{j+1})^{\beta_{j+1}} \ldots (2\pi i\xi_n)^{\beta_n} }{(2\pi i \xi_1)^{4m_1} +\ldots+ (2\pi i \xi_n)^{4m_n}}.
	\end{multline*}
	This operator satisfies the conditions of Fact 6, because if we make the substitution 
	$$
	\xi\mapsto (t^{\frac{1}{4m_1}} \xi_1, \ldots, t^{\frac{1}{4m_n}} \xi_n),
	$$
	then the function $u_j$ will be multiplied by
	$$
	t^{-1}t^{\frac{1}{4}(\frac{\beta_1}{m_1} +\ldots +\frac{\beta_n}{m_n} )+\frac{3}{4}}=t^{-\gamma}\quad \hbox{for some positive} \ \gamma.
	$$
	Therefore, we see that we may apply $R_j$ to $\eta$  and get a function from $L^1_\nul(\R^n)$. Next, we see that we may express $D^\beta \eta$ using these multipliers:
	$$
	D^\beta \eta = \sum_{j=1}^n \partial_j^{m_j} R_j \eta.
	$$ 	
	This formula can be verified easily by taking Fourier transforms of both parts of the equation. Now we use these multipliers to express the junior terms and arrive at an equation of the following form:
	\begin{equation}
	-\sigma_1 (\phi f_2) + \sigma_2(\phi f_1)+\sum_{j=1}^n \partial_j^{m_j}\omega_j = 0,
	\label{omega_def}
	\end{equation}
	where $\omega_j$ is a function on $\R^n$ that can be written as a linear combination of our multipliers applied to the products of $f_j$ and $\phi$ or certain derivatives of $\phi$. Recall that the operators $\sigma_1$ and $\sigma_2$ do not contain the monomials of the form $\partial_j^{m_j}$ (see Subsection 2.1). Therefore, finally, we may rewrite \eqref{omega_def} in the following form:
	\begin{equation}
	\sum_{a\cdot \alpha = k} D^\alpha \nu_{\alpha} = 0,
	\label{eq_with_nu}
	\end{equation}
	where for $\alpha = (0, 0, \ldots, m_j, 0,\ldots, 0)$ (here $m_j$ is at the $j$th position) $\nu_\alpha = \omega _j$ and for all other indices $\alpha$ the function $\nu_\alpha$ comes from the senior terms and is a linear combination of functions $\phi f_1$ and $\phi f_2$. Now we take the Fourier transform of the left hand side of this equation and restrict the resulting equation to the set of points of the form $(\xi_1, |\xi_2|, |\xi_2|^{a_3/a_2}, \ldots, |\xi_2|^{a_n/a_2})$. We get the following equation:
	$$
	\sum_{a\cdot \alpha = k} \xi_1^{\alpha_1} |\xi_2|^{\frac{k-a_1\alpha_1}{a_2}}\big[(2\pi i)^{\alpha_1+\ldots+\alpha_n} \hat{\nu}_\alpha (\xi_1, |\xi_2|, |\xi_2|^{a_3/a_2}, \ldots, |\xi_2|^{a_n/a_2})\big]=0.
	$$
	Next, we introduce the following notation:
	$$
	h_s(\xi_1, \xi_2)=\sum_{a\cdot \alpha = k, \ \alpha_1 = s} (2\pi i)^{\alpha_1+\ldots+\alpha_n} \hat{\nu}_\alpha (\xi_1, |\xi_2|, |\xi_2|^{a_3/a_2}, \ldots, |\xi_2|^{a_n/a_2}).
	$$
	We emphasize that these functions $h_s$ are the Fourier transforms of certain $L^1$-functions $\rho_s$ (which are linear combinations of functions $\nu_\alpha$ from equation \eqref{eq_with_nu} and therefore depend continuously on the initial  function $f$) restricted to the set $(\xi_1, |\xi_2|, |\xi_2|^{a_3/a_2}, \ldots, |\xi_2|^{a_n/a_2})$. Let us precise that $h_{m_1}$ is the restriction of $(2\pi i)^{m_1}\widehat{\omega}_1$, where $\omega_1$ is defined in \eqref{omega_def}. Using this notation, we rewrite our equation \eqref{eq_with_nu} in the following way:
	\begin{equation}
	\sum_{s=0}^{m_1} \xi_1^s |\xi_2|^{\frac{(m_1-s)a_1}{a_2}}h_s(\xi_1, \xi_2)=0.
	\label{solvab}
	\end{equation}
	This equation gives the compatibility condition for the solvability of system of equations in Fact 1 (see the remark below Fact 1; we apply it with $\theta_j=a_j/a_2$, $\varkappa=a_1/a_2$, $l=m_1$). 
	\begin{fact}
		Equation \eqref{solvab} and Fact 1 for these parameters imply the existence of functions $\eta_1, \ldots, \eta_{m_1}$ as defined in Fact 1.
		
	\end{fact}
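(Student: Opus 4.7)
The strategy is direct: solve the first $m_1$ equations of the system in Fact 1 recursively for $\eta_1,\ldots,\eta_{m_1}$, and then use the compatibility relation \eqref{solvab} to verify that the last equation is automatically satisfied.

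On the set where $\xi_1\neq 0$, define $\eta_1=h_0/\xi_1$ and, for $2\leq j\leq m_1$, $\eta_j=\xi_1^{-1}(h_{j-1}+|\xi_2|^\varkappa\eta_{j-1})$. A routine induction on $j$ yields the closed form
\[
\eta_j(\xi_1,\xi_2)=\xi_1^{-j}\sum_{i=0}^{j-1}h_i(\xi_1,\xi_2)\,\xi_1^{i}|\xi_2|^{(j-1-i)\varkappa},
\]
and by construction the first $m_1$ equations of Fact 1 hold pointwise on $\{\xi_1\neq 0\}$. Extending each $\eta_j$ by zero to the measure-zero set $\{\xi_1=0\}$ produces measurable functions on $\mathbb{R}^2$.

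It remains to check that $-|\xi_2|^\varkappa\eta_{m_1}=h_{m_1}$. Substituting the closed form at $j=m_1$, multiplying through by $\xi_1^{m_1}$, and rearranging shows that this identity is equivalent to
\[
\sum_{s=0}^{m_1}h_s(\xi_1,\xi_2)\,\xi_1^{s}|\xi_2|^{(m_1-s)\varkappa}=0,
\]
which, recalling $\varkappa=a_1/a_2$, is precisely equation \eqref{solvab}. Thus the entire system of Fact 1 is satisfied almost everywhere on $\mathbb{R}^2$, and the required $L^2$-bound on the $\eta_j$ is then provided by the conclusion of Fact 1 itself. The only obstacle is the bookkeeping of the powers of $\xi_1$ and $|\xi_2|$ in the inductive step, which is routine; no analytic input beyond the compatibility condition is needed at this stage.
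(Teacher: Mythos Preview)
Your proposal is correct and follows precisely the route the paper indicates: the paper itself does not spell out the argument but remarks that ``this simple statement (which may be proved by induction) may be found in the paper \cite{KMSpap}.'' Your recursive definition of the $\eta_j$, the inductive verification of the closed form, and the check that the final equation reduces to the compatibility relation \eqref{solvab} are exactly the induction the paper alludes to; nothing further is needed.
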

	We remind the reader that this simple statement (which may be proved by induction) may be found in the paper \cite{KMSpap}. Therefore, we may use it and get the following inequality:
	$$
	\max_j \Big( \int_{\R^2} |\eta_j (\xi)|^2 |\xi_2|^{\frac{a_1}{a_2}-1} \, d\xi \Big)^{1/2} \lesssim \max \|\rho_s\|_{L^1(\R^n)}\lesssim \max_{\alpha\cdot a=k}\|\nu_\alpha\|_{L^1}.
	$$
	This completes the construction of our operator into $L^2(\R^2; |\xi_2|^{\frac{a_1}{a_2}-1})$ --- it maps the function $f\in \widetilde{W}_1^{T_1, T_2}(\T^n)$ to the functions $\eta_j$ (for the moment, the reader may think that we have an operator to the direct sum of $m_1$ copies of $L^2(\R^2; |\xi_2|^{\frac{a_1}{a_2}-1})$; however, we will need only one function $\eta_j$, namely, $\eta_u$, where $u$ is exactly the number from Subsection 2.2; we will recall it later when we do the final computations). We denote the operator we have constructed by $s$. At this moment, we have the following diagram.
	$$
	\widetilde{C}_0^\HH(\T^n)\xrightarrow{j}\widetilde{C}_0^\TT(\T^n)\xrightarrow{i}\widetilde{C}_0^{T_1, T_2}(\T^n)\xrightarrow{g} \widetilde{W}_1^{T_1, T_2}(\T^n)\xrightarrow{s} L^2(\R^2; |\xi_2|^{\frac{a_1}{a_2}-1}).
	$$
	
	\subsection{Construction of an operator into the space $W_{1/2}^\HH(\T^n)$}
	
	In this subsection, we construct an operator $P$ acting on the space $\widetilde{C}_0^\HH(\T^n)^*$.
	
	Let us consider the space $\widetilde{W}_2^\HH (\T^n)$. 
	Recall that we identify it with a subspace of $L^2(\T^n)\oplus\ldots\oplus L^2(\T^n)=\bigoplus_{\alpha\in\Lambda}L^2(\T^n)$. 
	We denote by $P$ the orthogonal projection from $\bigoplus_{\alpha\in\Lambda}L^2(\T^n)$ onto $\widetilde{W}_2^\HH (\T^n)$.
	
	For a multiindex $l=(l_1,\ldots, l_n)$ and a number $\alpha\in\Lambda$ we introduce the notation $Z_\alpha^l =(0, \ldots, 0, z_1^{l_1}\ldots z_n^{l_n}, 0,\ldots , 0)\in \bigoplus_{\gamma\in\Lambda}L^2(\T^n)$, where the monomial occurs at index $\alpha\in\Lambda$. We now show that it is easy to describe the action of $P$ on these elements (which obviously form an orthonormal basis in $\bigoplus_{\alpha\in\Lambda}L^2(\T^n)$).
	\begin{lemma}
		If $l_j=0$ for some $j$ or if $l_j\not\equiv r_j \  (\mathrm{mod}\  1000)$ for some $j$, then $P(\phi_l^\alpha)=0$. Otherwise, the following formula holds true:
		$$
		P(Z_\alpha^l)=\bar{\lambda}_\alpha \Big( \sum_{\beta\in\Lambda} |\lambda_\beta|^2 \Big)^{-1} \cdot (\lambda_\beta z_1^{l_1}\ldots z_n^{l_n})_{\beta\in\Lambda}.
		$$
		Here $\lambda_\beta=(2\pi il_1)^{\beta_1}\ldots (2\pi il_n)^{\beta_n}$.
	\end{lemma}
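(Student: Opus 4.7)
The plan is to exploit the orthogonal decomposition of $\widetilde{W}_2^{\HH}(\T^n)$ inside $\bigoplus_{\alpha\in\Lambda}L^2(\T^n)$ induced by the monomial basis. Recall that the embedding sends $f \mapsto (D^\alpha f)_{\alpha\in\Lambda}$, so if $f = z_1^{l_1}\cdots z_n^{l_n}$ is an admissible monomial (meaning $l_j \neq 0$ and $l_j \equiv r_j \pmod{1000}$ for all $j$), its image is precisely the vector
$$
v_l := (\lambda_\beta\, z_1^{l_1}\cdots z_n^{l_n})_{\beta\in\Lambda}, \qquad \lambda_\beta = (2\pi i l_1)^{\beta_1}\cdots (2\pi i l_n)^{\beta_n}.
$$
First I would observe that the family $\{v_l\}$, as $l$ ranges over admissible multiindices, is an orthogonal basis of $\widetilde{W}_2^{\HH}(\T^n)$: distinct $v_l$, $v_m$ are orthogonal because the monomials $z^l$, $z^m$ are orthogonal in each $L^2(\T^n)$-slot, and their linear span is dense in $\widetilde{W}_2^{\HH}(\T^n)$ by definition of the tilde-subspace.

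Second, I would handle the vanishing case. If $l$ fails to be admissible, then $z^l$ is orthogonal to every admissible monomial $z^m$ in $L^2(\T^n)$, so in each slot the component of $Z_\alpha^l$ is orthogonal to the corresponding component of every $v_m$. Hence $Z_\alpha^l \perp \widetilde{W}_2^{\HH}(\T^n)$ and $P(Z_\alpha^l) = 0$.

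Third, suppose $l$ is admissible. Then the same slot-wise orthogonality shows $Z_\alpha^l \perp v_m$ for every admissible $m \neq l$, so $P(Z_\alpha^l)$ is a scalar multiple of $v_l$. The scalar is $\langle Z_\alpha^l, v_l\rangle / \|v_l\|^2$. A direct computation gives
$$
\langle Z_\alpha^l, v_l\rangle = \langle z^l,\, \lambda_\alpha z^l\rangle_{L^2(\T^n)} = \overline{\lambda_\alpha}, \qquad \|v_l\|^2 = \sum_{\beta\in\Lambda}|\lambda_\beta|^2,
$$
yielding exactly the formula in the statement. There is no substantial obstacle here; the only thing to check carefully is that the $v_l$'s really span $\widetilde{W}_2^{\HH}(\T^n)$ densely and that slot-wise orthogonality propagates correctly, both of which are routine consequences of the definition of the tilde-subspace and the orthonormality of the monomials in $L^2(\T^n)$.
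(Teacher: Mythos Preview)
Your argument is correct and is precisely the ``simple calculations'' the paper alludes to (the paper does not spell out a proof, merely noting that the first assertion is obvious and the formula can be verified by direct computation, with a reference to \cite{KisSid} for the two-dimensional analogue). Your decomposition of $\widetilde{W}_2^{\HH}(\T^n)$ into the orthogonal pieces $v_l$ and the slot-wise inner-product computation are exactly what is needed.
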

	
	We note that the first assertion of this lemma is obvious and the formula from the second assertion may be verified by simple calculations (by the way, its analog in the two-dimensional setting was proven in \cite{KisSid}).
	
	Our next goal is to realize how $P$ acts on the space $\widetilde{C}_0^\HH (\T^n)^*$. We note that the space $\widetilde{C}_0^\HH (\T^n)$ may be identified with a subspace of $\bigoplus_{\alpha\in\Lambda} C(\T^n)$. Therefore, we have (here we denote by $\mathcal{M} (\T^n)$ the space of (complex) measures with finite total variation on $\T^n$):
	$$
	\widetilde{C}_0^\HH (\T^n)^*  = \Big( \bigoplus_{\alpha\in \Lambda}  \mathcal{M} (\T^n) \Big) / \mathcal{X},
	$$
	where $\mathcal{X}$ is the annihilator of $\widetilde{C}_0^\HH(\T^n)$ in $\Big( \bigoplus_{\alpha\in\Lambda} C(\T^n) \Big)^*$. That is,
	$$
	\mathcal{X}= \Big\{ (\mu_\alpha)_{\alpha\in\Lambda}: \sum_{\alpha\in\Lambda} \int D^\alpha g \, d\bar{\mu}_\alpha = 0 \  \forall g \in \widetilde{C}_0^\HH(\T^n) \Big\}.
	$$
	
	At this point we should consider the operator $P_M$ that is the composition of $P$ and the operator of convolution with the $M$th Fej\'er kernel in all $m$ variables. We denote by $\Phi_M$ the $M$th Fej\'er kernel on the circle. Now we define the following operators:
	$$
	P_M(F) = P\big((\Phi_M \ast \mu_\alpha)_{\alpha\in \Lambda}\big), \quad F\in \widetilde{C}_0^\HH(\T^n)^*,
	$$
	where $(\mu_\alpha)_{\alpha\in\Lambda}$ is any representative of the functional $F$. This formula is meaningful since $P$ is an \textit{orthogonal} projection and if $(\nu_\alpha)_{\alpha\in\Lambda}$ lies in $\mathcal{X}$ then $(\Phi_M \ast \nu_\alpha)_{\alpha\in \Lambda}$ lies in $\mathcal{X}\cap \bigoplus_{\alpha\in\Lambda}L^2(\T^n)$ and therefore is annihilated by $P$. Now we are going to use the following fact.
	
	\begin{fact}
		The operators $P_M: \widetilde{C}_0^\HH (\T^n)^* \rightarrow W_{1/2}^\HH(\T^n)$ are uniformly bounded in $M$.
	\end{fact}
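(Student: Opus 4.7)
The plan is to compute $P_M F$ explicitly via the Lemma, reduce the target estimate to controlling each $\|D^\beta g_M\|_{L^{1/2}(\T^n)}$ with $g_M$ the representative of $P_MF$ in $\widetilde{W}_2^\HH(\T^n)$, and exploit the Cauchy--Schwarz consequence
$$
\|f_1f_2\|_{L^{1/2}(\T^n)} \le \|f_1\|_{L^1(\T^n)}\,\|f_2\|_{L^1(\T^n)}
$$
after a factorization of the relevant Fourier symbol.

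For a representative $(\mu_\alpha)_{\alpha\in\Lambda}$ of $F$ with $\sum_\alpha\|\mu_\alpha\|_\mathcal{M}\le 2\|F\|$, the convolutions $m_{\alpha,M}:=\Phi_M\ast\mu_\alpha$ are trigonometric polynomials of $L^1$-norm at most $\|\mu_\alpha\|$. Applying the Lemma termwise to the Fourier expansion of $m_{\alpha,M}$ identifies $P_MF$ with the tuple $(D^\beta g_M)_{\beta\in\Lambda}$, where
$$
\widehat{g_M}(l)=\frac{\sum_{\alpha\in\Lambda}\bar\lambda_\alpha(l)\widehat{m_{\alpha,M}}(l)}{\sum_{\gamma\in\Lambda}|\lambda_\gamma(l)|^2},\qquad \lambda_\gamma(l):=\prod_{j=1}^n(2\pi i l_j)^{\gamma_j},
$$
on the tilde spectrum and zero otherwise. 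Consequently, for each $\beta\in\Lambda$,
$$
D^\beta g_M=\sum_{\alpha\in\Lambda}M_{K_{\alpha,\beta}}(m_{\alpha,M}),\qquad K_{\alpha,\beta}(l):=\frac{\bar\lambda_\alpha(l)\lambda_\beta(l)}{\sum_{\gamma\in\Lambda}|\lambda_\gamma(l)|^2}.
$$
Since each axis multiindex $m_je_j$ lies in $\Lambda$, the denominator dominates $\sum_j|2\pi l_j|^{2m_j}$, and weighted AM--GM (valid because $\sum_j\beta_j/m_j=1$ for $\beta\in\Lambda$) gives $|\lambda_\beta|^2\le D(l)$. Hence $K_{\alpha,\beta}$ is bounded and anisotropically homogeneous of degree $0$ under $l_j\mapsto t^{a_j}l_j$.

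The idea is then to write each $M_{K_{\alpha,\beta}}(m_{\alpha,M})$ as a pointwise product $f^{(\alpha,\beta)}_1\cdot f^{(\alpha,\beta)}_2$ of $L^1(\T^n)$-functions whose norms are jointly controlled by $\|\mu_\alpha\|_\mathcal{M}$ and universal constants; the product inequality, followed by the $L^{1/2}$ quasi-triangle inequality summing over the finitely many $\alpha$ and maximizing in $\beta$, then yields $\|P_MF\|_{W_{1/2}^\HH}\lesssim\|F\|$ uniformly in $M$ (since $\|\Phi_M\|_{L^1}=1$ and $\|\widehat{\Phi_M}\|_\infty\le 1$). The main obstacle is constructing this factorization. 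The symmetric splitting $K_{\alpha,\beta}=(\bar\lambda_\alpha/\sqrt{D})(\lambda_\beta/\sqrt{D})$ gives two anisotropic Calder\'on--Zygmund multipliers of degree $0$, which only map $L^1\to L^{1,\infty}$ rather than $L^1\to L^1$, so their composition fails to land in $L^{1/2}$ with a bound linear in $\|F\|$. What is needed instead is an asymmetric decomposition that peels off enough positive anisotropic order on one side for Fact 6 to apply, producing on the torus an $L^1$-kernel carrying the linear $\mu$-dependence, while the complementary factor is a fixed $L^1$-bounded function. The tilde-spectrum restriction of Subsection 2.3 is essential throughout: it keeps all $|l_j|$ bounded below away from $0$, ensuring that $D(l)$ never vanishes and legitimizing the transfer of Fact 6 from $\R^n$ to $\T^n$.
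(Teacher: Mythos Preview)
Your reduction is correct up to the point where you identify each component $D^\beta g_M$ as $\sum_\alpha M_{K_{\alpha,\beta}}(\Phi_M\ast\mu_\alpha)$ with $K_{\alpha,\beta}$ anisotropically homogeneous of degree $0$. But the subsequent strategy has a genuine gap. A factorization $M_{K_{\alpha,\beta}}(m_{\alpha,M})=f_1\cdot f_2$ as a \emph{pointwise} product with $f_1\in L^1$ depending linearly on $\mu_\alpha$ and $f_2\in L^1$ fixed cannot arise from a splitting of the symbol: if $K=u_1u_2$ on the Fourier side one obtains the \emph{composition} $M_K=M_{u_1}\circ M_{u_2}$, not a product in physical space, so the Cauchy--Schwarz inequality you quote is inapplicable. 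Moreover, an asymmetric split $K=u_1u_2$ with $u_1$ of strictly negative anisotropic degree (so that Fact~6 gives $M_{u_1}:L^1_{\mathrm{null}}\to L^1_{\mathrm{null}}$) forces $u_2$ to have strictly positive degree, and then $M_{u_2}$ is unbounded on every $L^p$; there is no way to make ``the complementary factor a fixed $L^1$-bounded function''.

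In fact you discard precisely the observation that finishes the proof. You note that the degree-$0$ symbols $K_{\alpha,\beta}$ (or your symmetric square roots) ``only map $L^1\to L^{1,\infty}$'', but on the finite-measure space $\T^n$ one has Kolmogorov's inequality $\|h\|_{L^{1/2}(\T^n)}\lesssim\|h\|_{L^{1,\infty}(\T^n)}$. Hence a single application of the weak-$(1,1)$ bound to the full multiplier $M_{K_{\alpha,\beta}}$ already gives
\[
\|D^\beta g_M\|_{L^{1/2}(\T^n)}\;\lesssim\;\sum_{\alpha\in\Lambda}\|M_{K_{\alpha,\beta}}(\Phi_M\ast\mu_\alpha)\|_{L^{1,\infty}(\T^n)}\;\lesssim\;\sum_{\alpha\in\Lambda}\|\mu_\alpha\|_{\mathcal{M}},
\]
uniformly in $M$. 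This is exactly the route the paper takes: the mixed-homogeneity H\"ormander--Mikhlin theorem of Fabes--Rivi\`ere \cite{FabRiv} yields the weak-$(1,1)$ bound on $\R^n$, and the transference argument of \cite{KisSid} carries it to the torus. No factorization is needed.
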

	
	The proof of Fact 8 is based on the theory of singular integral operators (and Fourier multipliers) with mixed homogeneity developed in the paper \cite{FabRiv}. However, in \cite{FabRiv} everything was done for functions on $\R^n$ rather than on $\T^n$. From the lemma formulated above we see that the components of the operator $P$ are Fourier multipliers and we may consider the same multiplier operators for the Euclidean space $\R^n$, i.e., the Fourier multiplier with the following symbol:
	$$
	\frac{\bar{\lambda_\alpha} \lambda_\beta}{\sum_{\beta\in\Lambda}|\lambda_\beta|^2}, \quad \text{where}\ \lambda_\beta=(2\pi il_1)^{\beta_1}\ldots (2\pi il_n)^{\beta_n}.
	$$
	Here $l=(l_1,\ldots, l_n)$ is an element of $\R^n$. We note that these functions possess a certain (mixed) homogeneity: if we multiply $l_j$ by $c^{a_j}$ (where $c\in \R_+$) then the symbol of the multiplier does not change (since $\sum a_j\beta_j=k$ for any $\beta\in\Lambda$). Therefore, it follows from the results of \cite{FabRiv} (see H\" ormander--Mikhlin multiplier theorem with mixed homogeneity there) that the corresponding Fourier multipliers are uniformly of weak type $(1,1)$. Hence, in order to prove Fact 8 we simply use the transference principle which is obtained in the paper \cite{KisSid} for this context and conclude that the operators $P_M$ are uniformly of weak type $(1,1)$. 
	
	Once Fact 8 is established, we get the diagrams \eqref{diag_fin1} and \eqref{diag_fin2} (since the estimates are uniform in $M$, we omit it in our notation; in further computations, we are going to apply $P$ only to very ``nice" functions and not to measures). 
	
	\subsection{The final computations and a contradiction}
	
	In this subsection, we prove that the operator $P j^* i^* g^* s^*$ is not $2$-summing.
	
	\subsubsection{Definition of $v_{pq}$ and description of $sgij(v_{pq})$}
	
	First of all, we consider the following function: 
	$$
	v_{pq}=z_1^p z_2^q z_3^{\big[q^{\frac{a_3}{a_2}}\big]}\ldots z_n^{\big[q^{\frac{a_n}{a_2}}\big]}\in \widetilde{C}_0^\HH(\T^n).
	$$
	We recall from Subsection 2.3 that $v_{pq}\in\widetilde{C}_0^\HH(\T^n)$ means that $p$ and $q$ are divisible by $1000$ (and $q$ belongs to the set $\mathcal{A}$). Besides that, we are going to consider only the values of $q$ exceeding some big constant $C$ (which we choose later) and the values of $p$ satisfying the following inequalities:
	\begin{equation}
	\frac{\Delta}{2}q^{1/a_2}\leq p^{1/a_1}\leq \Delta q^{1/a_2}, 
	\label{key_cond}
	\end{equation}
	where $\Delta$ is also some big number to be chosen later. It will be easier for us to work with elements of unit norm in $C_0^\HH(\T^n)$, therefore we evaluate $\|v_{pq}\|_{C_0^\HH(\T^n)}$. For any operator $D^\alpha$ we see that 
	$$
	\|D^\alpha v_{pq}\|_{C(\T^n)}\asymp p^{\alpha_1}q^{\alpha_2}\big[ q^{\frac{a_3}{a_2}}\big]^{\alpha_3}\ldots \big[q^{\frac{a_n}{a_2}}\big]^{\alpha_n}. 
	$$
	We take only sufficiently large values of $q$ so that the expression $\big[q^{\frac{a_n}{a_2}}\big]$ is at least half of the quantity $q^{\frac{a_n}{a_2}}$. In this case, we may estimate our norm in the following way:
	\begin{equation}
	\|D^\alpha v_{pq}\|_{C(\T^n)}\asymp p^{\alpha_1}q^{\alpha_2} q^{\frac{a_3}{a_2}\alpha_3}\ldots q^{\frac{a_n}{a_2}\alpha_n}=p^{\alpha_1}q^{\frac{a\cdot\alpha}{a_2}} q^{-\frac{a_1\alpha_1}{a_2}}.
	\label{est_of_norm}
	\end{equation}

	The conditions on $p$ and $q$ that we imposed imply that $p^{\alpha_1}q^{-\frac{a_1\alpha_1}{a_2}}\asymp 1$ and therefore, recalling that $D^\alpha\in\HH$ if $a\cdot\alpha=k$, we have, by \eqref{est_of_norm}, $\|v_{pq}\|_{C_0^\HH(\T^n)}\asymp q^{\frac{k}{a_2}}=q^{m_2}$. We now consider $ij(v_{pq})\in \widetilde{C}_0^{T_1, T_2}(\T^n)$, noting that $(T_1 v_{pq}, T_2 v_{pq})=(c_{pq}'v_{pq}, d_{pq}'v_{pq})$. When we apply the senior part $\sigma_j$ of $T_j$ to $v_{pq}$, we get a linear combination of functions whose norms in $C(\T^n)$ are equal, by \eqref{key_cond} and \eqref{est_of_norm}, to
	$$
	p^{\alpha_1}q^{\frac{a\cdot\alpha}{a_2}} q^{-\frac{a_1\alpha_1}{a_2}}\asymp \Delta^{\alpha_1 a_1}q^{\frac{k}{a_2}}.
	$$
	We take big values of $\Delta$ and ensure that the summand with the largest $\alpha_1$ ``dominates" the others. When we apply some $D^\alpha$ from the junior part of the operator $T_j$ (which means that $a\cdot \alpha$ is less than $k$), we get a function with norm in $C(\T^n)$ equal, by \eqref{est_of_norm}, to
	$$
	p^{\alpha_1}q^{\frac{a\cdot\alpha}{a_2}} q^{-\frac{a_1\alpha_1}{a_2}}\asymp q^{\frac{a\cdot\alpha}{a_2}}=o(q^{\frac{k}{a_2}}).
	$$
	These calculations show that for big values of $C$ and $\Delta$ one summand from the expression $T_j v_{pq}$ dominates all the others and we get: $\|ij(v_{pq})\|_{C^{T_1,T_2}(\T^n)}=\max\{|c_{pq}'|, |d_{pq}'|\}\asymp q^{\frac{k}{a_2}}$. Hence, we write: $(T_1 v_{pq}, T_2 v_{pq})=q^{\frac{k}{a_2}}(c_{pq} v_{pq}, d_{pq}v_{pq})$, where $|c_{pq}|, |d_{pq}|\asymp 1$.
	
	Obviously,  $\|gij(v_{pq})\|_{W_1^{T_1,T_2}(\T^n)}$ is also equal to $\max\{|c_{pq}'|, |d_{pq}'|\}\asymp q^{\frac{k}{a_2}}$. We now consider $sgij(v_{pq})$. We apply the procedure described in Subsection 3.2 to $(f_1, f_2)=q^{-\frac{k}{a_2}}(T_1 v_{pq}, T_2 v_{pq})=(c_{pq}v_{pq}, d_{pq}v_{pq})$. We keep the notation $\psi$, $\phi$, $u_j$ and recall equations \eqref{omega_def} and \eqref{eq_with_nu}.
	
	

	The next step is to take the Fourier transform of the left-hand side of equation \eqref{omega_def}. Clearly, the Fourier transform of the function $\phi v_{pq}$ (here we assume that $v_{pq}$ is extended periodically to $\R^n$) is the following function:
	$$
	(\xi_1, \ldots, \xi_n) \mapsto \psi(\xi_1 - p, \xi_2 - q, \xi_3 - \big[ q^{\frac{a_3}{a_2}} \big], \ldots, \xi_n - \big[ q^{\frac{a_n}{a_2}} \big]).
	$$
	We restrict these functions to the set of points of the form $$(\xi_1, |\xi_2|, |\xi_2|^{a_3/a_2}, \ldots, |\xi_2|^{a_n/a_2})$$ and denote the result by $\psi_{pq}$:
	$$
	\psi_{pq}(\xi_1, \xi_2)=\psi(\xi_1 - p, |\xi_2| - q, |\xi_2|^{\frac{a_3}{a_2}}-\big[ q^{\frac{a_3}{a_2}} \big], \ldots, |\xi_2|^{\frac{a_n}{a_2}}-\big[ q^{\frac{a_n}{a_2}} \big]).
	$$
	Since $a_j \leq a_2$ for $j\geq 3$, we see that $\big||\xi_2|^{a_j/a_2}-\big[ q^{a_j/a_2} \big]\big|\leq 1+ \big||\xi_2|^{a_j/a_2}- q^{a_j/a_2} \big| \leq 1 + ||\xi_2|-q|$. Therefore, $\psi_{pq}(\xi_1, \xi_2)=1$ if $|\xi_1 - p|\leq 5$ and $||\xi_2|-q|\leq 5$. On the other hand, if $\max \{|\xi_1-p|, ||\xi_2|-q|\}>20$, then $\psi_{pq}(\xi_1, \xi_2)=0$. In particular, we conclude that (again, for big values of $p$ and $q$) on the support of the function $\psi_{pq}$ we have (recall inequalities \eqref{key_cond}):
	\begin{equation}
		\xi_1 \asymp p \asymp q^{a_1/a_2}\asymp |\xi_2|^{a_1/a_2}\asymp |\xi_2|^{m_2/m_1}
		\label{est_on_supp}.
	\end{equation}
	
	Therefore, we have the following formula for the symbols of the multipliers $R_j$ on the support of the Fourier transform of the function $\psi_{pq}$: 
	\begin{multline}
	|u_j(\xi_1, \xi_2, |\xi_2|^{\frac{a_3}{a_2}}, \ldots, |\xi_2|^{\frac{a_n}{a_2}})|\asymp \frac{|\xi_1|^{\beta_1} |\xi_2|^{m_2 \big( \frac{\beta_2}{m_2}+\ldots+\frac{\beta_n}{m_n} \big)+3m_2}}{|\xi_1|^{4m_1}+|\xi_2|^{4m_2}}\\ \asymp \frac{|\xi_2|^{m_2 \big( \frac{\beta_1}{m_1}+\ldots+\frac{\beta_n}{m_n} \big)+3m_2}}{|\xi_2|^{4m_2}}
	\asymp |\xi_2|^{m_2 \big( -1 + \frac{\beta_1}{m_1}+\ldots+\frac{\beta_n}{m_n} \big)}\asymp |\xi_1|^{-\varepsilon},
	\label{est_on_supp_2}
	\end{multline}
	where $\varepsilon=m_1\big(1-\big( \frac{\beta_1}{m_1}+\ldots+\frac{\beta_n}{m_n} \ \big)\big) > 0$.
	
	We introduce the notation $w_{pq}=q^{-m_2} v_{pq}$ so that $\|w_{pq}\|_{C_0^\HH(\T^n)}\asymp 1$. Our next goal is to compute the function $sgij(w_{pq})$.
	
	We arrive at the structure of the functions $h_s$ from the system of equations in Fact 1 as precised in Subsection 3.2. For the reader's convenience, we rewrite the formula defining the function $h_s$ here:
	$$
	h_s(\xi_1, \xi_2)=\sum_{a\cdot \alpha = k, \ \alpha_1 = s} (2\pi i)^{\alpha_1+\ldots+\alpha_n} \hat{\nu}_\alpha (\xi_1, |\xi_2|, |\xi_2|^{a_3/a_2}, \ldots, |\xi_2|^{a_n/a_2}).
	$$
	We see that the function $h_{m_1}$ is obtained from $\omega_1$, see the equation \eqref{omega_def} for the definition of $\omega_j$ (recall that for $\alpha = (0, \ldots, 0, m_j, \ldots, 0)$ we have  $\nu_\alpha = \omega _j$, see \eqref{eq_with_nu}) and hence $|h_{m_1}(\xi_1, \xi_2)|\lesssim |\xi_1|^{-\varepsilon}$ by \eqref{est_on_supp_2} (and of course $h_{m_1}(\xi) = 0$ when $\xi$ is not in the support of $\psi_{pq}$; we recall that some derivatives of function $\phi$ may appear in the definition of $\omega_j$, however, they do not affect our estimates). Furthermore, recall that we do not have the differential monomials $D^\alpha$ in the operators $\sigma_1$ and $\sigma_2$ if $\alpha_1$ is greater than $u$ (see Subsection 2.2). Hence, the functions $h_{m_1-1}, \ldots, h_{u+1}$ are equal to $0$. 
	
	Besides that, it is not difficult to see that $h_u = d_{pq}\psi_{pq}$. Indeed, the constructions in Subsection 2.2 ensured that in the equation \eqref{omega_def} the monomials $D^\alpha$ with $\alpha_1=u$ occur only in the summand $-\sigma_1(\phi f_2)=-d_{pq}\sigma_1(\phi v_{pq})$. Now recall that  the coefficient of the monomial $\xi_1^u \xi_2^{k-a_1 u}$ in  the polynomial $\Pi_1(\xi_1, \xi_2^{a_2}, \xi_2^{a_3},\ldots, \xi_2^{a_n})$ is equal to $-1$.
	
	Now we solve the following system of equations (these are simply the last $m_1-u+1$ equations from the system in Fact 1):
	\begin{equation*}
	\begin{cases}
	&-|\xi_2|^{\frac{a_1}{a_2}}\eta_{m_1}(\xi) = h_{m_1}(\xi);\\
	\xi_1 \eta_{j+1}(\xi)&- |\xi_2|^{\frac{a_1}{a_2}} \eta_j(\xi) = 0, \quad j=m_1-1, \ldots, u+1;\\
	\xi_1 \eta_{u+1}(\xi)&- |\xi_2|^{\frac{a_1}{a_2}} \eta_u(\xi) = h_u(\xi).
	\end{cases}
	\end{equation*}
	We solve these equations one by one, starting with the first. We get:
	\begin{gather*}
	\eta_{m_1}(\xi)=O(|\xi_2|^{-\frac{a_1}{a_2}} |\xi_1|^{-\varepsilon}), \eta_{m_1-1}(\xi)=O(|\xi_2|^{-\frac{a_1}{a_2}} |\xi_1|^{-\varepsilon}), \ldots, \\ 
	\eta_u(\xi) = |\xi_2|^{-\frac{a_1}{a_2}}(d_{pq}\psi_{pq}(\xi) + O(|\xi_1|^{-\varepsilon})).
	\end{gather*}
	Besides that, we again emphasize that $\eta_u(\xi)$ may be nonzero only on the support of the function $\psi_{pq}$.
	
	The function $\eta_u$ is the image of $w_{pq}$ under the action of $s$. Therefore, from now on we write $\eta^{(p, q)}$ instead of $\eta_u$ in order to emphasize the dependence of this function on $p$ and $q$.
	
	The norm of the function $\eta^{(p,q)}$ in $L^2(\R^2; |\xi_2|^{\frac{a_1}{a_2}-1})$ may be bounded from below in the following way (we use \eqref{est_on_supp} here):
	\begin{multline*}
	\|\eta^{(p, q)}\|_{L^2(\R^2; |\xi_2|^{\frac{a_1}{a_2}-1})}=\Big( \int_{\R^2} |\eta^{(p,q)} (\xi)|^2 |\xi_2|^{\frac{a_1}{a_2}-1} d\xi \Big)^{1/2}\gtrsim\\
	 \Big( \int_{\substack{|\xi_1-p|\leq 5 \\ ||\xi_2|-q|\leq 5}} |\xi_2|^{-2\frac{a_1}{a_2}} |\xi_2|^{\frac{a_1}{a_2}-1} d\xi \Big)^{1/2}
	\asymp \Big( \int_{\substack{|\xi_1-p|\leq 5 \\ ||\xi_2|-q|\leq 5}} |\xi_1|^{-1} |\xi_2|^{-1} d\xi \Big)^{1/2}\\
	\asymp (p^{-1}q^{-1})^{1/2}.
	\end{multline*}
	In a similar way we get the following bound for this norm from above:
	$$
	\|\eta^{(p, q)}\|_{L^2(\R^2; |\xi_2|^{\frac{a_1}{a_2}-1})} \lesssim \Big( \int_{\substack{|\xi_1-p|\leq 20 \\ ||\xi_2|-q|\leq 20}} |\xi_1|^{-1} |\xi_2|^{-1} d\xi \Big)^{1/2}\asymp (p^{-1}q^{-1})^{1/2}.
	$$
	
	Summing up, we see that the image of the function $w_{pq}$ under the action of $sgij$ is the function $\eta^{(p,q)}$ such that $\|\eta^{(p,q)}\|_{L^2(\R^2; |\xi_2|^{\frac{a_1}{a_2}-1})}\asymp (p^{-1}q^{-1})^{1/2}$. Now we take the functions $ (pq)^{1/2} \eta^{(p,q)}$ (so that the norms of these functions are bounded away from zero and infinity). Since the supports of these functions do not intersect, they are orthogonal and hence they form a weakly $2$-summable sequence in ${L^2(\R^2; |\xi_2|^{\frac{a_1}{a_2}-1})}$. 
	
	\subsubsection{A weakly $\ell^2$-summable sequence whose image by $Pj^*i^*g^*s^*$ is not $\ell^2$-summable}
	
	Now we prove that the sequence of functions $(P j^* i^* g^* s^*) ((pq)^{1/2}\eta^{(p, q)})$ is not $2$-summable in $W_{1/2}^\HH(\T^n)$.
	
	First, we evaluate the element $(j^* i^* g^* s^*)((pq)^{1/2}\eta^{(p, q)})$. Consider the following monomials:
	$$
	v_{\tilde{p}, \tilde{q}, s_3, \ldots, s_n} = z_1^{\tilde{p}} z_2^{\tilde{q}} z_3^{s_3}\ldots z_n^{s_n}\in \widetilde{C}_0^\HH(\T^n).
	$$ 
	The trigonometric polynomials are dense in $\widetilde{C}_0^\HH(\T^n)$ and therefore two elements in $C_0^\HH(\T^n)^*$ coincide if their pairings with the above monomials coincide. So the element $(j^* i^* g^* s^*)((pq)^{1/2}\eta^{(p, q)})$ is determined by the following identity (which must be true for every function $v_{\tilde{p}, \tilde{q}, s_3, \ldots, s_n}$):
	\begin{equation}
	\langle v_{\tilde{p}, \tilde{q}, s_3, \ldots, s_n}, (j^* i^* g^* s^*) (\eta^{(p, q)}) \rangle= \langle (sgij)(v_{\tilde{p}, \tilde{q}, s_3, \ldots, s_n}), \eta^{(p, q)} \rangle.
	\label{adj}
	\end{equation}
	The right-hand side of this formula is understood as the scalar product in $L^2(\R^2; |\xi_2|^{\frac{a_1}{a_2}-1})$. 
	
	The function $(sgij)(v_{\tilde{p}, \tilde{q}, s_3, \ldots, s_n})$ may be evaluated in the same way as above. We do not make any claims about the norm of this function but we may still conclude that it is supported on the set 
	$$
	\{\xi=(\xi_1, \xi_2): |\xi_1-\tilde{p}|\leq 20, ||\xi_2|-\tilde{q}|\leq 20, |s_j-|\xi_2|^{a_j/a_2}|\leq 20, \ 3\leq j\leq n\}.
	$$
	The right-hand side of \eqref{adj} may be nonzero only if the intersection of supports of functions $(sgij)(v_{\tilde{p}, \tilde{q}, s_3, \ldots, s_n})$ and $\eta^{(p,q)}$ is nonempty. Therefore if $\xi=(\xi_1, \xi_2)$ lies in the supports of both functions, we have: $|\xi_1-p|\leq 20$ and $|\xi_1 - \tilde{p}| \leq 20$. Since both $p$ and $\tilde{p}$ are divisible by $1000$, we have $p=\tilde{p}$. Similarly, we see that $q=\tilde{q}$.
	
	Next, we have: 
	$$
	|s_j-|\xi_2|^{\frac{a_j}{a_2}}| \leq 20 \quad \text{and}\quad \big| \big[ q^{\frac{a_j}{a_2}}\big] - |\xi_2|^{\frac{a_j}{a_2}}  \big| \leq 20.
	$$
	These inequalities imply that $|s_j-\big[ q^{\frac{a_j}{a_2}} \big]|\leq 40$. But since the function $v_{\tilde{p}, \tilde{q}, s_3, \ldots, s_n}$ lies in $\widetilde{C}_0^\HH(\T^n)$, we have $s_j\equiv \big[ q^{\frac{a_j}{a_2}} \big] \ (\mathrm{mod}\ 1000)$. Therefore, $s_j= \big[ q^{\frac{a_j}{a_2}} \big]$.
	
	Thus we see that the right-hand side of \eqref{adj} is nonzero only for the function 
	$$
	v_{p,q,[q^{\frac{a_3}{a_2}}], \ldots, [q^\frac{a_n}{a_2}]} = v_{pq}.
	$$
	But for this function we have:
	\begin{multline*}
	\langle (sgij)v_{pq}, \eta^{(p,q)} \rangle = 
	\langle q^{\frac{k}{a_2}} (sgij)w_{pq}, \eta^{(p,q)}  \rangle \\
	= \langle  q^{m_2} \eta^{(p,q)}, \eta^{(p,q)} \rangle = \varkappa_{pq} (pq)^{-1} q^{m_2},
	\end{multline*}
	where $|\varkappa_{pq}|\asymp 1$.
	
	Thus, we have proved that if we apply the element $(j^* i^* g^* s^*)(\eta^{(p,q)})\in \widetilde{C}_0^\HH(\T^n)^*$ to any monomial other than $v_{pq}$ we get zero and if we apply it to $v_{pq}$ we get $\varkappa_{pq}q^{m_2}(pq)^{-1}$. Now we must realize which collection of measures corresponds to $(j^* i^* g^* s^*)(\eta^{(p,q)})$.
	
	We see that the collection of functions $(D^\alpha v_{pq})_{\alpha\in \Lambda}\in \bigoplus_{\alpha\in\Lambda} C(\T^n)$ corresponds to $v_{pq}\in \widetilde{C}_0^\HH(\T^n)$. One of these functions equals $\partial_1^{m_1} v_{pq} = (2\pi i p)^{m_1}v_{pq}$. Therefore, we may take the following collection $(\mu_\alpha)_{\alpha\in\Lambda}$ corresponding to $(j^* i^* g^* s^*)(\eta^{(p,q)})$:
	\begin{gather*}
	\mu_\alpha = q^{m_2} (2\pi i p)^{-m_1} \varkappa_{pq} (pq)^{-1} v_{pq}\ \text{for}\ \alpha=(m_1, 0,\ldots,0);\\
	\mu_\alpha = 0 \ \text{for all other} \ \alpha\text{'s}. 
	\end{gather*}
	Now we apply the projection $P$ to this element and we have the formula for it (see the Lemma from Subsection 3.3). Clearly, in our case all the numbers $\lambda_\beta$ are equivalent (i.e., for $\beta, \gamma \in \Lambda$ we have $\lambda_\beta \asymp \lambda_\gamma$) because, by \eqref{key_cond}, we have:
	$$
	|\lambda_\beta| \asymp p^{\beta_1} q^{\beta_2} q^{\frac{a_3}{a_2}\beta_3}\ldots q^{\frac{a_n}{a_2}\beta_n}\asymp q^{\frac{a\cdot\beta}{a_2}}=q^{m_2}.
	$$
	Hence, we have, since $q^{m_2}p^{-m_1}\asymp 1$ by \eqref{est_on_supp}:
	\begin{multline*}
	\sum_{p,q} \|(P j^* i^* g^* s^*)((pq)^{1/2}\eta^{(p,q)})\|^2_{W_{1/2}^\HH(\T^n)}\\
	\asymp \sum_{p,q} \|(pq)^{-1/2}v_{pq}\|_{L^{1/2}}^2\asymp \sum_{p,q} (pq)^{-1}.
	\end{multline*}
	Recall that we consider only large $p$ and $q$ such that $\frac{\Delta}{2}q^{1/a_2}\leq p^{1/a_1}\leq \Delta q^{1/a_2}$ and $q\in\mathcal{A}$ (and $p$ is divisible by $1000$). For any fixed $q$ there are about $q^{a_1/a_2}$ such numbers $p$ and each of them is equivalent to $q^{a_1/a_2}$. Therefore, we can write:
	$$
	\sum_{p,q} (pq)^{-1}\asymp \sum_{q\in\mathcal{A}, q>C} q^{-1}.
	$$
	And this sum diverges (by the definition of $\mathcal{A}$) and we finally get a contradiction and the theorem is proved.
	
	\subsection*{Acknowledgements} 
	This research was supported by the Russian Science Foundation grant 19-71-30002. 	The author is kindly grateful to his scientific advisor, S. V. Kislyakov, for posing this problem, for very helpful discussions during the process of its solution and for the great help in editing of this text. Besides that, the author is grateful to the anonymous referees for their helpful recommendations concerning the presentation.

\end{document}